\documentclass{amsart}
\usepackage{amssymb}
\usepackage{latexsym}
\usepackage{amsmath}
\usepackage{euscript} 
\usepackage{amsmath,amssymb,amsthm,graphicx,epstopdf,mathrsfs,url, color}
\usepackage{xcolor}
\usepackage{hyperref} 

  \def\dC{{\mathbb C}}

 \def\dN{{\mathbb N}} 
  \def\dR{{\mathbb R}}

\def\bm\chi{\mbox{\boldmath$\chi$}}

\def\ker{{\rm ker\,}}
\def\ran{{\rm ran\,}}

\def\dom{{\rm dom\,}}

\let\xker=\ker \def\ker{{\xker\,}}

\def\senki{{\lbrack\negthinspace [\bot ]\negthinspace\rbrack}}
\def\senki+{{\lbrack\negthinspace [+] \negthinspace\rbrack}}

\newtheorem{theorem}{Theorem}[section]
\newtheorem*{theorem*}{Theorem}
\newtheorem{proposition}[theorem]{Proposition}

\newtheorem{lemma}[theorem]{Lemma}
\theoremstyle{definition}

\numberwithin{equation}{section}

\begin{document}

\title{Schr\"odinger operators with oblique transmission conditions in $\dR^2$}

\author[J.~Behrndt]{Jussi~Behrndt}
\author[M.~Holzmann]{Markus Holzmann}
\author[G.~Stenzel]{Georg Stenzel}

\address{Institut f\"ur Angewandte Mathematik\\
Technische Universit\"at Graz \\
Steyrergasse 30\\
8010 Graz \\
Austria}
\email{behrndt@tugraz.at, holzmann@math.tugraz.at, gstenzel@math.tugraz.at}

\begin{abstract}
In this paper we study the spectrum of self-adjoint Schr\"odinger operators in $L^2(\mathbb{R}^2)$ with a new type of transmission conditions along a smooth closed curve $\Sigma\subseteq \mathbb{R}^2$.
Although these \textit{oblique} transmission conditions are formally similar to $\delta'$-conditions on $\Sigma$ (instead of the normal derivative here the Wirtinger derivative
is used) the spectral properties are significantly different: it turns out that for attractive interaction strengths the discrete spectrum is always unbounded from below.
Besides this unexpected spectral effect we also identify the essential spectrum, and we prove a Krein-type resolvent formula and a Birman-Schwinger principle. Furthermore, we show that these Schr\"odinger operators with oblique transmission conditions arise naturally as non-relativistic
limits of Dirac operators with electrostatic and Lorentz scalar $\delta$-interactions justifying their usage as models in quantum mechanics. 
\end{abstract}

\maketitle

\section{Introduction} \label{Introduction}
In many quantum mechanical applications one considers particles moving in an external potential field which
is localized near a set $\Sigma$ of measure zero.
Such strongly localized fields can be modeled by singular potentials that are supported on $\Sigma$ only;
of particular importance in this regard are $\delta$ and $\delta'$-interactions. To be more precise, assume
that $\Sigma$ splits $\dR^2$ into a bounded domain $\Omega_{+}$ and an unbounded domain $\Omega_{-} = \dR^2 \setminus \overline{\Omega_{+}}$,
and consider the formal Schr\"odinger differential expressions
\begin{equation} \label{FormalDiffExpressionDeltaDeltaP}
\mathcal{H}_{\delta, \alpha} = - \Delta + \alpha \delta_{\Sigma} \quad \text{and} \quad \mathcal{H}_{\delta', \alpha} = - \Delta + \alpha \delta'_{\Sigma},\quad
\alpha\in\dR.
\end{equation}
These singular perturbations of the free Schr\"odinger operator $-\Delta$ are characterized by certain transmission conditions
along the interface $\Sigma$
for the functions in the operator domain. For $\delta$-interactions one considers functions  $f : \dR^2 \to \dC$ such that
the restrictions $f_{\pm} = f \upharpoonright \Omega_{\pm}$ satisfy the transmission conditions
\begin{equation} \label{DInteraction}
f_{+} = f_{-} \quad \text{and} \quad -\frac{\alpha}{2} \big( f_{+} + f_{-} \big) = \big( \partial_{\nu} f_{+} - \partial_{\nu} f_{-} \big) \quad \text{on } \Sigma,
\end{equation}
 while $\delta'$-interactions are modeled by the transmission conditions
\begin{equation} \label{DprimeInteraction}
f_{+} - f_{-} = -\frac{\alpha}{2 } \big( \partial_{\nu} f_{+} + \partial_{\nu} f_{-} \big) \quad \text{and} \quad  \partial_{\nu} f_{+} = \partial_{\nu} f_{-} \quad \text{on } \Sigma;
\end{equation}
here $\partial_{\nu} f_{\pm}$ is the normal derivative and $\nu = (\nu_1, \nu_2)$ the unit normal vector field on $\Sigma$
pointing outwards of $\Omega_{+}$. The spectra and resonances of the self-adjoint realizations associated with the formal expressions \eqref{FormalDiffExpressionDeltaDeltaP} in $L^2(\dR^2)$ are well
understood, see, e.g., \cite{BLL13, BEKS94, E08, EK15, G16,G19a,G19b, GS14, MPS16}. In particular, the essential spectrum is given by $[0, \infty)$ and the discrete spectrum consists of at most finitely many points for every interaction strength $\alpha < 0$, while there is no negative spectrum if $\alpha \geq 0$.

In contrast to the transmission conditions \eqref{DInteraction} and \eqref{DprimeInteraction} we are interested in a new type of transmission conditions  of the form
\begin{equation}\label{ObliqueTC}
(\nu_1 + i \nu_2) \big( f_{+} - f_{-} \big)  = - \alpha\big( \partial_{\overline{z}} f_{+} + \partial_{\overline{z}} f_{-} \big) \quad \text{and} \quad  \partial_{\overline{z}} f_{+} = \partial_{\overline{z}} f_{-} \quad \text{on } \Sigma,
\end{equation}
where $\alpha\in\dR$ and $\partial_{\overline{z}} = \frac{1}{2} ( \partial_1 + i \partial_2)$ is the Wirtinger derivative. In the sequel such jump
conditions will be referred to as
\textit{oblique} transmission conditions.  Note that the conditions \eqref{ObliqueTC} can be rewritten as
\begin{equation} \label{OTCdNdT}
f_{+} - f_{-} = - \frac{\alpha}{2} \big( \partial_{\nu} f_{+} + \partial_{\nu}  f_{-} + i \partial_{t} f_{+} +  i \partial_{t} f_{-} \big)
\quad \text{and} \quad  \partial_{\overline{z}} f_{+} = \partial_{\overline{z}} f_{-} \quad \text{on } \Sigma,
\end{equation}
where  $\partial_t$ denotes the tangential derivative. Thus,
on a formal level there is some analogy to the $\delta'$-transmission conditions in \eqref{DprimeInteraction},
but it will turn out that the properties of the corresponding self-adjoint realization in $L^2(\dR^2)$
differ significantly from those of Schr\"odinger operators with $\delta'$-interactions.

To make matters mathematically rigorous, assume that the curve $\Sigma$ is the boundary of a bounded and simply connected $C^\infty$-domain $\Omega_+$ with open complement $\Omega_{-} = \dR^2 \setminus \overline{\Omega_{+}}$,
denote the $L^2$-based Sobolev space of first order by $H^1$, let
$\gamma_D^\pm:H^1(\Omega_{\pm})\rightarrow L^2(\Sigma)$ be
 the Dirichlet trace operators, and define for $\alpha\in\dR$ the Schr\"odinger operator with oblique transmission
conditions by
\begin{equation} \label{DefSchroedOpOBT}
\begin{split}
T_{\alpha} f &= \left( - \Delta f_{+} \right) \oplus  \left( - \Delta f_{-} \right),\\
\dom T_{\alpha}  &= \Big\{ f \in H^1(\Omega_{+}) \oplus H^1(\Omega_{-}) \: \big| \: \partial_{\overline{z}}f_{+} \oplus \partial_{\overline{z}} f_{-} \in H^1(\dR^2),  \\
&\hspace{1.5cm}  (\nu_1 + i \nu_2) \bigl(\gamma_D^+ f_{+} - \gamma_D^- f_{-} \bigr) = -  \alpha \bigl( \gamma_D^+ ( \partial_{\overline{z}} f_{+}) + \gamma_D^- ( \partial_{\overline{z}} f_{-} )\bigr) \Big\}.
\end{split}
\end{equation}
The next theorem is the main result in this paper. We discuss the spectral properties of the Schr\"odinger
operators $T_\alpha$ and, in particular, we show in item (ii) that for every $\alpha<0$ the operator $T_\alpha$ is necessarily unbounded from below and the discrete
spectrum in $(-\infty,0)$ is infinite and accumulates to $-\infty$. In items (iii) and (iv) we shall make use of the potential operator 
$\Psi_{\lambda}:L^2(\Sigma) \to L^2(\dR^2)$ and the single layer boundary integral operator $S(\lambda):L^2(\Sigma) \to L^2(\Sigma)$ 
defined in \eqref{DefPsi} and \eqref{DefSLBIO}, respectively.

\begin{theorem}  \label{ThmPropertiesTAlpha}
For any  $\alpha \in \dR$ the operator $T_{\alpha}$ is
self-adjoint in $L^2(\dR^2)$ and  the essential spectrum is given by
$$\sigma_{\text{\rm ess}}(T_{\alpha}) = [0, \infty).$$
Furthermore, the following statements hold:
\begin{enumerate}
\item[(i)] If $\alpha \geq 0$,  then  $\sigma_{\text{\rm disc}}(T_{\alpha}) = \emptyset$ and $T_\alpha$ is a nonnegative operator
in $L^2(\dR^2)$.
\item[(ii)] If $\alpha < 0$, then $\sigma_{\text{\rm disc}}(T_{\alpha})$ is infinite, unbounded from below, and does not accumulate to $0$. Moreover, for every fixed $n \in \mathbb{N}$ the $n$-th discrete eigenvalue $\lambda_n \in \sigma_{\text{\rm disc}}(T_{\alpha})$ 
(ordered non-increasingly) admits 
the asymptotic expansion
\begin{equation*}
\lambda_n = - \frac{4}{\alpha^2} + \mathcal{O}(1) \quad \text{for } \alpha \to 0^{-},
\end{equation*}
where the dependence on $n$ appears in the $\mathcal{O}(1)$-term. 
\item[(iii)] For $\lambda \in \dC \setminus [0 , \infty)$ the Birman-Schwinger principle is valid:
\begin{equation*} 
\lambda \in \sigma_{\rm p}(T_{\alpha}) \quad \Longleftrightarrow \quad 1 \in \sigma_{\rm p} \big(\alpha \lambda S(\lambda) \big).
\end{equation*}
\item[(iv)] For $\lambda \in \rho(T_{\alpha})=\dC \setminus ( [0, \infty) \cup \sigma_{\rm p}(T_{\alpha}))$ the operator $I - \alpha \lambda S(\lambda)$ is boundedly invertible in $L^2(\Sigma)$ and
the resolvent formula
\begin{equation*}
(T_{\alpha} - \lambda)^{-1} = (- \Delta - \lambda)^{-1} +\alpha \Psi_{\lambda} \bigl( I - \alpha \lambda S(\lambda) \bigr)^{-1} \Psi_{\overline{\lambda}}^{\ast}
\end{equation*}
holds,
where $- \Delta$ is the free Schr\"odinger operator defined on $H^2(\dR^2)$.
\end{enumerate}
\end{theorem}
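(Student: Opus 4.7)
The plan is to derive self-adjointness, the essential spectrum, and items~(i), (iii), (iv) together from a quadratic-form identity and a direct verification of the Krein-type resolvent formula, and then to treat item~(ii) separately by a semiclassical analysis of the Birman--Schwinger operator $\alpha\lambda S(\lambda)$ on the compact curve $\Sigma$. Item~(ii) is the main obstacle.

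\textbf{Quadratic form and item (i).} Using $-\Delta = -4\partial_z\partial_{\bar z}$ and the fact that for $f\in\dom T_\alpha$ the function $g := \partial_{\bar z}f_+ \oplus \partial_{\bar z} f_-$ lies in $H^1(\dR^2)$ and thus has a single trace $g|_\Sigma$, I would apply Green's identity for Wirtinger derivatives separately on $\Omega_+$ and $\Omega_-$ and add the results to obtain
\begin{equation*}
\langle T_\alpha f, f\rangle_{L^2(\dR^2)} = 4\|\partial_{\bar z}f\|_{L^2(\dR^2)}^2 + 2\int_\Sigma g\,\overline{(f_- - f_+)}\,(\nu_1 - i\nu_2)\,d\sigma.
\end{equation*}
Substituting the transmission condition, which in the form $(\nu_1+i\nu_2)(f_+ - f_-) = -2\alpha g|_\Sigma$ yields $f_+ - f_- = -2\alpha g(\nu_1 - i\nu_2)$ since $|\nu_1+i\nu_2|=1$, the boundary term collapses to $4\alpha\|g|_\Sigma\|_{L^2(\Sigma)}^2$. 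This immediately gives symmetry of $T_\alpha$ and nonnegativity for $\alpha\geq 0$, hence item~(i) once the essential spectrum is identified as $[0,\infty)$.

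\textbf{Resolvent formula, self-adjointness, essential spectrum, and item (iii).} For (iv), I would fix $\lambda\in\dC\setminus[0,\infty)$ with $I-\alpha\lambda S(\lambda)$ boundedly invertible on $L^2(\Sigma)$ and verify by direct computation that
\begin{equation*}
R(\lambda) := (-\Delta-\lambda)^{-1} + \alpha\Psi_\lambda\bigl(I-\alpha\lambda S(\lambda)\bigr)^{-1}\Psi_{\overline{\lambda}}^{\ast}
\end{equation*}
maps $L^2(\dR^2)$ into $\dom T_\alpha$ and satisfies $(T_\alpha-\lambda)R(\lambda)=I$. The key inputs are: $\Psi_\lambda\varphi$ solves $(-\Delta-\lambda)u=0$ in $\dR^2\setminus\Sigma$; its Dirichlet trace is continuous across $\Sigma$ and equals $S(\lambda)\varphi$; and a jump formula for the oblique combination $(\nu_1+i\nu_2)\gamma_D^\pm$ applied to $\Psi_\lambda\varphi$ expressible in terms of $S(\lambda)$ only. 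Combined with the symmetry from the form computation, this yields both (iv) and self-adjointness of $T_\alpha$. Since $\Sigma$ is smooth and compact the operator $S(\lambda)$ is compact on $L^2(\Sigma)$ and $\Psi_\lambda$ is compact from $L^2(\Sigma)$ into $L^2(\dR^2)$ (through $H^{3/2}_{\mathrm{loc}}$-mapping properties and decay at infinity for $\lambda\notin[0,\infty)$); therefore the correction $R(\lambda) - (-\Delta-\lambda)^{-1}$ is compact and Weyl's theorem gives $\sigma_{\mathrm{ess}}(T_\alpha) = [0,\infty)$. Item~(iii) is then immediate: any $\varphi\in L^2(\Sigma)$ with $(I-\alpha\lambda S(\lambda))\varphi=0$ yields the eigenfunction $\Psi_\lambda\varphi \in \dom T_\alpha$ of $T_\alpha$ at $\lambda$, and conversely every eigenfunction has this form.

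\textbf{Item (ii) — the main obstacle.} Fix $\alpha<0$ and write $\lambda = -\kappa^2$ with $\kappa>0$. By (iii), $\lambda$ is a discrete eigenvalue iff $1\in\sigma_{\mathrm p}\bigl(-\alpha\kappa^2 S(-\kappa^2)\bigr)$. The decisive ingredient is that $S(-\kappa^2)$ is an elliptic classical pseudodifferential operator of order $-1$ on the compact smooth curve $\Sigma$ with leading symbol $(2\sqrt{\xi^2+\kappa^2})^{-1}$, so that one can decompose
\begin{equation*}
2\kappa\, S(-\kappa^2) = I + T(\kappa),
\end{equation*}
where $T(\kappa)$ is a compact self-adjoint operator on $L^2(\Sigma)$ whose operator norm tends to $0$ as $\kappa\to\infty$. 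The Birman--Schwinger equation becomes $-\tfrac{\alpha\kappa}{2}(I+T(\kappa))\varphi = \varphi$, and for each eigenvalue $t_n(\kappa)\to 0$ of $T(\kappa)$ the implicit equation $-\tfrac{\alpha\kappa}{2}(1+t_n(\kappa))=1$ has, by a perturbation / fixed-point argument, a unique solution $\kappa_n(\alpha)\to\infty$ as $\alpha\to 0^-$. This produces infinitely many discrete eigenvalues $\lambda_n(\alpha) = -\kappa_n(\alpha)^2$ accumulating at $-\infty$. Expanding $\lambda_n = -4/\bigl(\alpha^2(1+t_n)^2\bigr)$ to first order in $t_n = \mathcal O(1/\kappa_n)$ gives the claimed asymptotics $\lambda_n = -4/\alpha^2 + \mathcal O(1)$ as $\alpha\to 0^-$ with the $n$-dependence absorbed into the remainder. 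Non-accumulation at $0$ is a consequence of analyticity of $\alpha\lambda S(\lambda)$ in $\lambda$ together with the established compactness. The main technical hurdle is the rigorous justification of the pseudodifferential expansion of $S(-\kappa^2)$ uniformly in large $\kappa$ and the extraction of the sharp $\mathcal O(1)$ remainder for fixed $n$ as $\alpha\to 0^-$.
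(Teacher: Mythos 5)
Your treatment of self-adjointness, the essential spectrum, and items (i), (iii), (iv) follows essentially the paper's route: the quadratic-form identity giving symmetry and nonnegativity for $\alpha\geq 0$, the potential operator $\Psi_\lambda$ with its jump relations, a direct verification of the Krein formula, compactness plus Weyl's theorem, and the Birman--Schwinger equivalence. Two points need repair but are not fatal: the Dirichlet trace of $\Psi_\lambda\varphi$ is \emph{not} continuous across $\Sigma$ and is not $S(\lambda)\varphi$ (that is the single layer potential $SL(\lambda)\varphi$; for $\Psi_\lambda\varphi=-2i\partial_z SL(\lambda)\varphi$ it is the trace of $\partial_{\overline z}(\Psi_\lambda\varphi)$ that is single-valued and produces $\lambda S(\lambda)\varphi$, while the oblique combination of the Dirichlet traces jumps by $\varphi$); and in (iv) you must \emph{derive} the bounded invertibility of $I-\alpha\lambda S(\lambda)$ for $\lambda\in\rho(T_\alpha)$ (injectivity from (iii) plus Fredholm's alternative for the compact $S(\lambda)$), rather than assume it, before self-adjointness via surjectivity of $T_\alpha-\lambda$ can be concluded.

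The argument for item (ii) has a genuine gap. Since $S(-\kappa^2)$ is compact in $L^2(\Sigma)$, the operator $2\kappa S(-\kappa^2)$ is compact for every $\kappa$, so it is never a small perturbation of the identity: $T(\kappa):=2\kappa S(-\kappa^2)-I$ is not compact and $\|T(\kappa)\|\geq 1$ up to an arbitrarily small error, because the eigenvalues $\mu_n\bigl(2\kappa S(-\kappa^2)\bigr)$ accumulate at $0$ for each fixed $\kappa$; in particular the ``eigenvalues $t_n(\kappa)$ of $T(\kappa)$'' accumulate at $-1$ and do not all tend to $0$. What your scheme actually requires is only the fixed-$n$ statement $\mu_n\bigl(2\kappa S(-\kappa^2)\bigr)\to 1$ as $\kappa\to\infty$ with rate $\mathcal{O}(1/\kappa)$, together with continuity and strict monotonicity of $\lambda\mapsto\lambda\mu_n(S(\lambda))$ (needed already to define and order the solutions and to get a unique $\lambda_n$); but this quantitative input is exactly the hard part, and the false norm-smallness claim provides no proof of it. The paper obtains it by proving monotonicity via $\frac{d}{d\lambda}(\lambda S(\lambda))\geq 0$, and by transferring the strong-coupling asymptotics $E_n(\alpha)=-\alpha^2/4+\mu_n(H)+\mathcal{O}(\alpha^{-1}\ln|\alpha|)$ of Schr\"odinger operators with $\delta$-interactions (Exner--Yoshitomi) through the classical Birman--Schwinger relation $-1\in\sigma_{\rm p}(\alpha S(\lambda))$; this yields both $\lambda\mu_n(S(\lambda))\to-\infty$ and the sharp $\mathcal{O}(1)$ remainder in $\lambda_n=-4/\alpha^2+\mathcal{O}(1)$. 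Finally, your non-accumulation-at-$0$ argument is insufficient: analytic Fredholm theory gives discreteness of the singular set only inside $\dC\setminus[0,\infty)$, and $0$ lies on the boundary of that domain, so accumulation at $0$ is not excluded by analyticity; the paper rules it out with the quantitative bound $\|\lambda S(\lambda)\|\to 0$ as $\lambda\to 0^-$, which forces $\|\alpha\lambda S(\lambda)\|<1$ near $0$.
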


To illustrate the significance of Theorem~\ref{ThmPropertiesTAlpha} we show that Schr\"odinger operators with oblique transmission conditions
arise naturally as non-relativistic limits of Dirac operators with electrostatic and Lorentz scalar $\delta$-interactions. 
To motivate this, consider one-dimensional Dirac operators with $\delta'$-interactions of strength $\alpha \in \mathbb{R}$ supported in $\Sigma=\{0\}$. These are first order differential operators in $L^2(\mathbb{R})^2$ and the singular interaction is modeled by transmission conditions for functions in the operator domain, which for sufficiently smooth $f = (f_1,f_2) \in L^2(\mathbb{R})^2$ are given by
\begin{equation} \label{transmission_conditions_Dirac_delta_prime}
f_1(0+)-f_1(0-)  = i \frac{\alpha c}{2} \big( f_2(0+)+f_2(0-) \big)  \quad \text{and} \quad f_2(0+) = f_2(0-),
\end{equation}
where $c>0$ is the speed of light. It is known that the associated self-adjoint Dirac operators converge in the non-relativistic limit to a Schr\"odinger operator with a $\delta'$-interaction of strength $\alpha$; cf. \cite{AGHH05, GS87} and also \cite{BMP17, CMP13} for generalizations. It is not difficult to see that~\eqref{transmission_conditions_Dirac_delta_prime} can be rewritten as the transmission conditions associated with a Dirac operator with a combination of an electrostatic and a Lorentz scalar $\delta$-interaction of strengths $\eta =- \frac{\alpha c^2}{2}$ and $\tau = \frac{\alpha c^2}{2}$, respectively, as they were studied in dimension one recently in \cite{BHSS22} and in higher space dimensions in, e.g., \cite{AMV15, BEHL19, BHOP20, BHSS22}.

To find a counterpart of the above result in dimension two, consider a Dirac operator with electrostatic and Lorentz scalar $\delta$-shell interactions of strength $\eta$ and $\tau$, respectively, supported on $\Sigma$, which is formally given by
\begin{equation} \label{def_A_eta_tau_formal}
\mathcal{A}_{\eta , \tau} = A_0 + \left( \eta I_2 + \tau \sigma_3 \right) \delta_{\Sigma};
\end{equation}
here $A_0$ is the unperturbed Dirac operator, $I_2$ is the $2 \times 2$-identity matrix and $\sigma_3 \in \mathbb{C}^{2 \times 2}$ is given in~\eqref{PauliMatrices}. The differential expression $\mathcal{A}_{\eta , \tau}$ gives rise to a self-adjoint operator $A_{\eta, \tau}$ in $L^2(\mathbb{R}^2)^2$, see~\eqref{DefAetaTau}. If one chooses, as above, $\eta = -\frac{\alpha c^2}{2}$ and $\tau = \frac{\alpha c^2}{2}$ and computes the non-relativistic limit, then instead of a Schr\"odinger operator with a $\delta'$-interaction one gets the somewhat unexpected limit $T_\alpha$. Of course, this is compatible with the one-dimensional result described above, as the one-dimensional counterparts of~\eqref{DprimeInteraction} and~\eqref{OTCdNdT} coincide, since there are no tangential derivatives in $\mathbb{R}$. However, in higher dimensions Schr\"odinger operators with oblique transmission conditions should be viewed as the non-relativistic counterparts of Dirac operators with transmission conditions generalizing~\eqref{transmission_conditions_Dirac_delta_prime}.
Related results on nonrelativistic limits of three-dimensional Dirac operators with singular interactions can be found in \cite{BEHL18, BEHL19, H19}.
The precise result about the non-relativistic limit described above is stated in the following theorem and shown in Section~\ref{SectionNonrelLimit}.

\begin{theorem} \label{ThmNonRelLimit}
Let $\alpha \in \dR$. 
Then for all $\lambda \in \mathbb{C} \setminus \mathbb{R}$ one has  
\begin{equation*}
\lim_{c \rightarrow \infty} \left( A_{-\alpha c^2/2, \alpha c^2/2} - (\lambda + c^2 /2) \right)^{-1} = \left( \begin{array}{cc}
\left( T_{\alpha} - \lambda \right)^{-1} & 0 \\
0 & 0 
\end{array} \right),
\end{equation*}
where the convergence is in the operator norm and the convergence rate is $\mathcal{O}\left( \frac{1}{c} \right)$.
\end{theorem}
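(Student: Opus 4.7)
The strategy is to derive a Krein-type resolvent formula for $A_{\eta,\tau}$ with $\eta = -\alpha c^2/2$, $\tau = \alpha c^2/2$, expand each operator appearing in it in powers of $1/c$, and compare the limit with the Schr\"odinger Krein formula of Theorem~\ref{ThmPropertiesTAlpha}~(iv). For the Dirac side such Krein-type formulas are available from \cite{BEHL19,BHOP20,BHSS22}; they take the schematic form
\[
(A_{\eta,\tau} - z)^{-1} = (A_0 - z)^{-1} - \Phi_z^c \bigl(I + V_{\eta,\tau}\,\mathcal{C}_z^c\bigr)^{-1} V_{\eta,\tau} (\Phi_{\bar z}^c)^*,
\]
where $V_{\eta,\tau} = \eta I_2 + \tau \sigma_3$, and $\Phi_z^c$, $\mathcal{C}_z^c$ denote the associated Dirac single-layer potential and its boundary integral counterpart. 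With the present choice of coupling constants $V_{\eta,\tau} = \mathrm{diag}(0,-\alpha c^2)$, so that only the $(2,2)$-block of the involved operators effectively contributes to the correction, up to the $c^2$-scaling.

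The key input is the non-relativistic expansion of the free Dirac resolvent at the shifted spectral parameter $z = \lambda + c^2/2$. Using $(A_0 - z)^{-1} = (A_0 + z)(A_0^2 - z^2)^{-1}$ together with $A_0^2 = -c^2\Delta + c^4/4$, one obtains the closed-form identity
\[
(A_0 - z)^{-1} = \begin{pmatrix} 1 + \tfrac{\lambda}{c^2} & -\tfrac{2i}{c}\partial_z \\[1mm] -\tfrac{2i}{c}\partial_{\bar z} & \tfrac{\lambda}{c^2} \end{pmatrix}\Bigl(-\Delta - \lambda - \tfrac{\lambda^2}{c^2}\Bigr)^{-1}.
\]
Hence the $(1,1)$-block converges to the free Schr\"odinger resolvent $(-\Delta - \lambda)^{-1}$ at rate $\mathcal{O}(1/c^2)$, the off-diagonal blocks are of order $1/c$ and involve the Wirtinger derivatives $\partial_z, \partial_{\bar z}$, and the $(2,2)$-block behaves like $(\lambda/c^2)(-\Delta - \lambda)^{-1}$. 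Applying the analogous block-by-block reductions to $\Phi_z^c$, $\mathcal{C}_z^c$, and $(\Phi_{\bar z}^c)^*$ (expressible through the same matrix expansion together with the scalar single-layer $\Psi_\lambda$ and its adjoint) yields their non-relativistic limits.

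The $c^2$-prefactor in $V_{\eta,\tau}$ is then exactly what compensates the $c^{-2}$-decay of the $(2,2)$-block of $\mathcal{C}_z^c$, giving $V_{\eta,\tau}\mathcal{C}_z^c \to -\alpha\lambda S(\lambda)$ on the surviving block with error $\mathcal{O}(1/c^2)$. Since $I - \alpha\lambda S(\lambda)$ is boundedly invertible on $L^2(\Sigma)$ for $\lambda \in \rho(T_\alpha)$ by Theorem~\ref{ThmPropertiesTAlpha}~(iv), a Neumann-series perturbation argument yields uniform invertibility of $I + V_{\eta,\tau}\mathcal{C}_z^c$ for all sufficiently large $c$. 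Combining the expansions of the three factors $\Phi_z^c$, $(I+V_{\eta,\tau}\mathcal{C}_z^c)^{-1}V_{\eta,\tau}$, and $(\Phi_{\bar z}^c)^*$, and using integration by parts together with the identity $(-\Delta - \lambda)\Psi_\lambda\varphi = \varphi \otimes \delta_\Sigma$ to rewrite the Wirtinger-derivative contributions in terms of $\Psi_\lambda$, $\Psi_{\bar\lambda}^*$, and $S(\lambda)$, the $(1,1)$-block of the correction is expected to converge to $\alpha\Psi_\lambda(I - \alpha\lambda S(\lambda))^{-1}\Psi_{\bar\lambda}^*$, matching Theorem~\ref{ThmPropertiesTAlpha}~(iv); the remaining three matrix blocks vanish in the limit. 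Tracking every remainder produces the $\mathcal{O}(1/c)$ rate claimed.

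The main obstacle is the reduction just described: verifying that the off-diagonal Wirtinger-derivative structure of the Dirac resolvent, combined with the $c^2$-scaled interaction and the genuine jump of the Dirac single-layer across $\Sigma$, collapses cleanly onto the unweighted $\Psi_\lambda$ and $S(\lambda)$ appearing in the Schr\"odinger Krein formula. The symmetric choice $\tau = -\eta$, which mirrors the oblique transmission structure encoded in~\eqref{OTCdNdT}, is precisely what triggers the cancellations needed for this collapse; any other scaling of $(\eta,\tau)$ would either destroy the limit or produce a different boundary operator, so that the theorem really identifies $T_\alpha$ as the correct two-dimensional non-relativistic counterpart of the Dirac $\delta$-shell problem generalising~\eqref{transmission_conditions_Dirac_delta_prime}.
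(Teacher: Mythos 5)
Your overall architecture coincides with the paper's: start from the Krein-type resolvent formula of \cite{BHOP20, BHSS22} for $A_{-\alpha c^2/2,\alpha c^2/2}$, whose coupling matrix $\eta I_2+\tau\sigma_3=-\alpha c^2 M_3$ is concentrated in the $(2,2)$-block, prove operator-norm expansions of the free resolvent, of the potential operator $\Phi_{\lambda+c^2/2}$, its adjoint, and of the boundary integral operator, establish stability of the relevant inverse, and assemble the limit against the Schr\"odinger Krein formula of Theorem~\ref{ThmPropertiesTAlpha}~(iv). Where you genuinely differ is in how the expansions are obtained: your identity $(A_0-z)^{-1}=(A_0+z)(A_0^2-z^2)^{-1}$ with $z=\lambda+c^2/2$ reduces everything to the Schr\"odinger resolvent at the shifted parameter $\mu_c=\lambda+\lambda^2/c^2$, so the bounds follow from resolvent identities and the boundedness of $\partial_z(-\Delta-\mu_c)^{-1}$, and via $\Phi_z=(A_0-z)^{-1}\gamma_D'$ and \eqref{repr_S_SL} the relevant blocks of $c\,\Phi_{\lambda+c^2/2}M_3$ and $c^2M_3\mathcal{C}_{\lambda+c^2/2}M_3$ become $\Psi_{\mu_c}$, $\tfrac{\lambda}{c}SL(\mu_c)$ and $\lambda S(\mu_c)$. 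The paper instead proves the corresponding Lemma~\ref{NonrelLimitProp} by pointwise Bessel-kernel estimates and the Schur test; your route is a legitimate, arguably more structural, alternative for the same estimates.

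Two points in your write-up need repair. First, the Neumann-series step fails as stated: $V_{\eta,\tau}\mathcal{C}_{\lambda+c^2/2}=-\alpha c^2M_3\mathcal{C}_{\lambda+c^2/2}$ does \emph{not} converge and is not even uniformly bounded, since its $(2,1)$-entry is $-\alpha c^2$ times a Cauchy-type singular integral operator of norm $\mathcal{O}(1/c)$ (coming from the $\sigma\cdot x$ part of $G_{\lambda+c^2/2}$), hence of size $\mathcal{O}(c)$. One therefore cannot perturb $I+V_{\eta,\tau}\mathcal{C}_{\lambda+c^2/2}$ directly around $I-\alpha\lambda S(\lambda)M_3$; one must first compress with $M_3$ on the right, using $M_3=M_3^2$ and the identity \eqref{relation_inverse}, because only the compressed operator $c^2M_3\mathcal{C}_{\lambda+c^2/2}M_3$ converges to $\lambda S(\lambda)M_3$ (this is \eqref{estimate_C_z}); the perturbation argument (or \cite[IV. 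Theorem~1.16]{kato}) is then applied to $I-\alpha c^2M_3\mathcal{C}_{\lambda+c^2/2}M_3$, while invertibility of the uncompressed operator for every $c$ comes for free from the self-adjointness results of \cite{BHSS22}, as the paper notes. Second, the identity $(-\Delta-\lambda)\Psi_\lambda\varphi=\varphi\otimes\delta_\Sigma$ you invoke is false: by \eqref{relation_Psi_lambda_SL} one has $\Psi_\lambda=-2i\partial_z SL(\lambda)$, so the distributional source is $-2i\partial_z(\gamma_D'\varphi)$, not $\gamma_D'\varphi$. Fortunately neither this identity nor any integration by parts is needed in the final assembly: once $c\,\Phi_{\lambda+c^2/2}M_3\to\Psi_\lambda M_2$, $c\,M_3\Phi_{\overline{\lambda}+c^2/2}^{\ast}\to M_2^{\top}\Psi_{\overline{\lambda}}^{\ast}$ and the compressed inverse converge in norm with rate $\mathcal{O}(1/c)$, the correction term converges as a product of three norm-convergent factors, and its limit matches $(T_\alpha-\lambda)^{-1}M_1$ through the matrix identity $M_2M_3M_2^{\top}=M_1$, exactly as in the paper's proof.
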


\textbf{Notations.}  Throughout this paper $\Omega_{+} \subseteq \dR^2$ is a bounded and simply connected $C^{\infty}$-domain 
and $\Omega_{-} = \dR^2 \setminus \overline{\Omega_{+}}$ is the corresponding exterior domain with 
boundary $\Sigma = \partial \Omega_{-} = \partial \Omega_{+}$. 
The unit normal vector field on $\Sigma$ pointing outwards of $\Omega_+$ is denoted by $\nu$. Moreover, for $z \in \dC \setminus[0, \infty)$ we choose the square root  $\sqrt{z}$ such that  $\text{Im}  \sqrt{z}  > 0$ holds. The modified Bessel function of order $j \in \dN_0$ is denoted by $K_j$. 
 
For $s \geq 0$ the spaces $H^s(\dR^2)^n$, $H^s(\Omega_{\pm})^n$, and $H^s(\Sigma)^n$ are the standard $L^2$-based Sobolev spaces of $\dC^n$-valued functions defined on $\dR^2$, $\Omega_{\pm}$, and $\Sigma$, respectively. If $n=1$ we simply write $H^s(\dR^2)$, $H^s(\Omega_{\pm})$, and $H^s(\Sigma)$. For negative $s < 0$ we define the spaces 
$H^s(\dR^2)^n$ and 
$H^s(\Sigma)^n$ as the anti-dual spaces of $H^{-s}(\dR^2)^n$ and $H^{-s}(\Sigma)^n$, respectively. 
We denote the restrictions of functions $f: \mathbb{R}^2 \rightarrow \mathbb{C}^n$ onto $\Omega_\pm$ by $f_\pm$; in this sense we write $H^1(\mathbb{R}^2 \setminus \Sigma)^n = H^1(\Omega_+)^n \oplus H^1(\Omega_-)^n$ and identify $f \in H^1(\mathbb{R}^2 \setminus \Sigma)^n$ with $f_+ \oplus f_-$, where $f_\pm \in H^1(\Omega_\pm)^n$.
The Dirichlet trace operators are denoted by $\gamma_D^\pm:H^1(\Omega_{\pm})\rightarrow L^2(\Sigma)$ 
and we shall write $\gamma_D:H^1(\mathbb R^2)\rightarrow L^2(\Sigma)$ for the trace on $H^1(\mathbb R^2)$; sometimes 
these trace operators are also viewed as bounded mappings to $H^{1/2}(\Sigma)$.

For a Hilbert space $\mathcal{H}$ we write $\mathcal{L}(\mathcal{H})$ for the space of all everywhere defined, linear, and bounded operators on $\mathcal{H}$.  Furthermore, the domain, kernel, and range of a linear operator $T$ from a Hilbert space $\mathcal{G}$ to $\mathcal{H}$ are denoted by $\text{dom}\, T$, $\text{ker}\, T$, and $\text{ran}\, T$, respectively. The resolvent set, the spectrum, the essential spectrum, the discrete spectrum, and the point spectrum of a self-adjoint operator $T$ are denoted by $\rho(T)$, $\sigma(T)$, $\sigma_{\text{ess}}(T)$, $\sigma_{\text{disc}}(T)$, and $\sigma_\textup{p}(T)$.  The eigenvalues of compact self-adjoint operators $K \in \mathcal{L}(\mathcal{H})$ are denoted by $\mu_n(K)$ and are ordered by their absolute values.

\subsection*{Acknowledgement.}
We are indebted to the referee for a very careful reading of our manuscript and various helpful suggestions to improve the text.
Jussi Behrndt and Markus Holzmann gratefully acknowledge financial support by the Austrian Science Fund (FWF): P33568-N. This publication is based upon work from COST Action CA 18232 MAT-DYN-NET, supported by COST (European Cooperation in Science and Technology), www.cost.eu.

\section{Proof of Theorem~\ref{ThmPropertiesTAlpha}} \label{SectionSchroedOpsobligetrans}

In this section the main result of this paper will be proved. For this, some families of integral operators are used. Define for $\lambda \in \dC \setminus [0, \infty)$ the function $L_{\lambda}$ by
\begin{equation} \label{DefIntegralkernPhiLambda}
L_{\lambda}(x) = \frac{\sqrt{ \lambda}}{2 \pi} K_1 \bigl( - i \sqrt{ \lambda} |x| \bigr) \frac{ x_1 - i x_2}{|x|}, \quad  x = (x_1,x_2) \in \mathbb{R}^2 \setminus \{0\},
\end{equation} 
and the operator $\Psi_{\lambda} : L^2(\Sigma) \to L^2(\dR^2)$ by
\begin{equation}  \label{DefPsi}
\Psi_{\lambda} \varphi (x) = \int_{\Sigma} L_{\lambda}(x-y) \varphi (y) d \sigma(y), \quad \varphi 
\in L^2(\Sigma),  ~x \in \dR^2\setminus\Sigma.
\end{equation}
Moreover, for $\lambda \in \dC \setminus [0, \infty)$ we make use of the single layer potential $SL(\lambda): L^2(\Sigma) \rightarrow H^1(\mathbb{R}^2)$ and the 
single layer boundary integral operator $S(\lambda): L^2(\Sigma) \to L^2(\Sigma)$ associated with $-\Delta - \lambda$ that are defined by
\begin{equation} \label{def_SL_potential}
  SL(\lambda) \varphi(x) = \int_\Sigma \frac{1}{2 \pi} K_0 \bigl( - i \sqrt{\lambda} |x- y| \bigr) \varphi(y) d \sigma(y), \quad \varphi \in L^2(\Sigma),\quad x \in \mathbb{R}^2 \setminus \Sigma,
\end{equation}
and
\begin{equation} \label{DefSLBIO}
S(\lambda) \varphi(x) = \int_{\Sigma} \frac{1}{2 \pi} K_0 \bigl( - i \sqrt{\lambda} |x- y| \bigr) \varphi(y) d \sigma(y), \quad \varphi \in L^2(\Sigma), ~x \in \Sigma.
\end{equation}
It is known that $SL(\lambda)$ and $S(\lambda)$ are bounded and $\text{ran}\, S(\lambda)  \subseteq H^{1}(\Sigma)$; cf. \cite[Theorem~6.12 and Theorem~7.2]{M00}. In particular, $S(\lambda)$ gives rise to a compact  operator in $H^s(\Sigma)$ for every $s \in [0,1]$. Furthermore, 
$S(\lambda)$ is self-adjoint and positive for $\lambda < 0$ (see \textit{Step~1} in the proof of Proposition~\ref{EigenvalueFuncUnbdd}).
Some 
properties of $\Psi_{\lambda}$ and $S(\lambda)$ that are important in the proof of Theorem~\ref{ThmPropertiesTAlpha} 
are summarized in the following two propositions; cf. Appendix~\ref{Appendix} for the proof of Proposition~\ref{PropertiesPsi}
and Proposition~\ref{EigenvalueFuncUnbdd}.

 \begin{proposition} \label{PropertiesPsi}
Let $\lambda \in \dC \setminus [0, \infty)$ and let $\Psi_{\lambda}$ be given by~\eqref{DefPsi}. Then
\begin{equation} \label{relation_Psi_lambda_SL}
  \Psi_{\lambda} = -2 i \partial_z SL(\lambda): L^2(\Sigma) \to L^2(\dR^2)
\end{equation}
is bounded and the following is true:
\begin{enumerate} 
\item[(i)] $\Psi_{\lambda}$  gives rise to a bijective mapping $\Psi_{\lambda} : H^{1/2}(\Sigma) \to \mathcal{H}_{\lambda}$, where 
\begin{multline*}
  \mathcal{H}_{\lambda} := \big\{ f \in H^1(\mathbb{R}^2 \setminus \Sigma)  \: | \: \partial_{\overline{z}}f_{+} \oplus \partial_{\overline{z}} f_{-} \in H^1(\dR^2),  \left( - \Delta - \lambda \right) f_{\pm} = 0 \text{ on } \Omega_{\pm} \}.
\end{multline*}
\item[(ii)] $\Psi_{\lambda}^{\ast} : L^2(\dR^2) \to L^2(\Sigma)$ is compact, $\Psi_\lambda^* = -2i \gamma_D \partial_{\overline{z}} (-\Delta-\overline{\lambda})^{-1}$, and $\ran \Psi_{\lambda}^{\ast} \subseteq H^{1/2}(\Sigma)$.
\item[(iii)] For all $\varphi \in H^{1/2}(\Sigma)$ the jump relations 
\begin{equation*}
\begin{split}
i ( \nu_1 + i \nu_2) \big(  \gamma_D^{+}  ( \Psi_{\lambda} \varphi )_{+} -   \gamma_D^{-}  ( \Psi_{\lambda} \varphi )_{-} \big) &= \varphi,
\\ 
-i \big(  \gamma_D^{+} \partial_{\overline{z}} ( \Psi_{\lambda} \varphi )_{+} +   \gamma_D^{-} \partial_{\overline{z}} ( \Psi_{\lambda} \varphi )_{-} \big) &= \lambda S(\lambda)  \varphi,
\end{split}
\end{equation*} 
hold.
\end{enumerate}
\end{proposition}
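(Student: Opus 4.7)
The plan is to base everything on the identity $\Psi_{\lambda} = -2i\partial_z SL(\lambda)$ stated in \eqref{relation_Psi_lambda_SL}. I verify it at the level of integral kernels: applying $-2i\partial_z$ (in $x$) to $\frac{1}{2\pi}K_0(-i\sqrt{\lambda}|x|)$ and using $K_0'(t) = -K_1(t)$ together with the chain rule produces exactly the kernel $L_{\lambda}(x)$ of $\Psi_\lambda$. Boundedness of $\Psi_\lambda: L^2(\Sigma) \to L^2(\dR^2)$ is then immediate, since $SL(\lambda): L^2(\Sigma) \to H^1(\dR^2)$ and $\partial_z: H^1(\dR^2) \to L^2(\dR^2)$ are both bounded.

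For (iii), I would decompose the Wirtinger derivatives in the normal-tangential frame on $\Sigma$,
\begin{equation*}
\partial_z = \tfrac{1}{2}(\nu_1 - i\nu_2)(\partial_\nu - i\partial_t), \qquad \partial_{\overline{z}} = \tfrac{1}{2}(\nu_1 + i\nu_2)(\partial_\nu + i\partial_t).
\end{equation*}
Since $SL(\lambda)\varphi \in H^1(\dR^2)$, its tangential derivative is continuous across $\Sigma$, and the classical single layer jump formula describes the jump of the normal derivative. Plugging these into $\Psi_\lambda\varphi = -2i\partial_z SL(\lambda)\varphi$ and using $(\nu_1+i\nu_2)(\nu_1-i\nu_2) = 1$ yields the first identity. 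For the second, $\Delta = 4\partial_z\partial_{\overline{z}}$ together with $(-\Delta-\lambda)SL(\lambda)\varphi = 0$ on $\dR^2\setminus\Sigma$ gives
\begin{equation*}
\partial_{\overline{z}}\Psi_\lambda\varphi = -\tfrac{i}{2}\Delta SL(\lambda)\varphi = \tfrac{i\lambda}{2}SL(\lambda)\varphi \quad \text{on } \dR^2\setminus\Sigma,
\end{equation*}
and taking traces from both sides with $\gamma_D^\pm SL(\lambda)\varphi = S(\lambda)\varphi$ delivers the claim.

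For (i), the above identity for $\partial_{\overline{z}}\Psi_\lambda\varphi$ shows that $\partial_{\overline{z}}\Psi_\lambda\varphi \in H^1(\dR^2)$ whenever $\varphi \in H^{1/2}(\Sigma)$, since $SL(\lambda)\varphi \in H^1(\dR^2)$. The mapping property $SL(\lambda): H^{1/2}(\Sigma) \to H^2(\dR^2\setminus\Sigma)$ gives $\Psi_\lambda\varphi \in H^1(\dR^2\setminus\Sigma)$, and $(-\Delta-\lambda)\Psi_\lambda\varphi = -2i\partial_z(-\Delta-\lambda)SL(\lambda)\varphi = 0$ on $\Omega_\pm$, so $\Psi_\lambda\varphi \in \mathcal{H}_\lambda$. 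Injectivity follows immediately from the first jump relation in (iii). For surjectivity, given $f \in \mathcal{H}_\lambda$ I would set $\varphi := i(\nu_1+i\nu_2)(\gamma_D^+ f_+ - \gamma_D^- f_-) \in H^{1/2}(\Sigma)$ and consider $g := f - \Psi_\lambda\varphi \in \mathcal{H}_\lambda$. By the choice of $\varphi$ the oblique jump of $g$ vanishes, so $g \in H^1(\dR^2)$; the decomposition of $\partial_{\overline{z}}$ combined with $\partial_{\overline{z}}g \in H^1(\dR^2)$ and continuity of $\gamma_D^\pm g_\pm$ then forces $\gamma_D^+\partial_\nu g_+ = \gamma_D^-\partial_\nu g_-$, whence $g \in H^2(\dR^2)$ solves $(-\Delta-\lambda)g = 0$ in $\dR^2$ and therefore vanishes, because $\lambda \in \rho(-\Delta)$.

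Finally for (ii), the formula $\Psi_\lambda^* = -2i\gamma_D\partial_{\overline{z}}(-\Delta-\overline{\lambda})^{-1}$ is derived by integrating by parts in $(\Psi_\lambda\varphi, g)_{L^2(\dR^2)}$ for $g \in C_c^\infty(\dR^2)$, using the standard identity $SL(\lambda)^* = \gamma_D(-\Delta-\overline{\lambda})^{-1}$ (which follows from the kernel symmetry $\overline{K_0(-i\sqrt{\lambda}r)} = K_0(-i\sqrt{\overline{\lambda}}r)$) and the commutativity of $\partial_{\overline{z}}$ with the free resolvent, followed by a density argument. The factorization $L^2(\dR^2) \to H^2(\dR^2) \to H^1(\dR^2) \to H^{1/2}(\Sigma) \hookrightarrow L^2(\Sigma)$ along the maps $(-\Delta-\overline{\lambda})^{-1}$, $\partial_{\overline{z}}$, $\gamma_D$, and the compact Rellich embedding then delivers both $\ran \Psi_\lambda^{\ast} \subseteq H^{1/2}(\Sigma)$ and compactness of $\Psi_\lambda^{\ast}$. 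The step I expect to require most care is the $H^2$-regularity of $g$ in the surjectivity part of (i): one has to convert $H^1$-regularity of $\partial_{\overline{z}}g$ into continuity of $\partial_\nu g$ across $\Sigma$ via the tangential-normal decomposition before elliptic regularity can be invoked to conclude $g = 0$.
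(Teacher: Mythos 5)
Your proposal is correct and follows the paper's architecture almost verbatim: the identity $\Psi_\lambda=-2i\partial_z SL(\lambda)$, the normal--tangential splitting of the Wirtinger derivatives together with the single layer jump relations for (iii), the identity $\partial_{\overline z}\Psi_\lambda\varphi=\tfrac{i\lambda}{2}SL(\lambda)\varphi$ for the second jump relation and for membership in $\mathcal H_\lambda$, injectivity from the first jump relation, surjectivity with the same choice $\varphi=i(\nu_1+i\nu_2)(\gamma_D^+f_+-\gamma_D^-f_-)$, and compactness of $\Psi_\lambda^*$ via the factorization through $H^{1/2}(\Sigma)$ and Rellich. Two local differences are worth noting. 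First, you establish \eqref{relation_Psi_lambda_SL} by differentiating the kernel of $SL(\lambda)$ directly and only afterwards obtain the adjoint formula by duality, whereas the paper starts from $\widehat\Psi_\lambda=-2i\gamma_D\partial_{\overline z}(-\Delta-\overline\lambda)^{-1}$, computes its kernel, and identifies $\Psi_\lambda=\widehat\Psi_\lambda^{\,*}$, getting boundedness and item (ii) in one stroke; both routes are legitimate, and yours only needs the routine justification that the pointwise derivative off $\Sigma$ represents the distributional derivative of the $H^1$-function $SL(\lambda)\varphi$. Second, the step you flag as delicate in the surjectivity argument is handled more simply in the paper: for $h:=f-\Psi_\lambda\varphi$ one has $h\in H^1(\dR^2)$ and $\partial_{\overline z}h\in H^1(\dR^2)$, and since $|\xi_1+i\xi_2|=|\xi|$ this gives $h\in H^2(\dR^2)$ directly by Fourier transform, so $h\in\ker(-\Delta-\lambda)=\{0\}$; no matching of normal derivatives is needed. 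Your alternative (deduce $\gamma_D^+\partial_\nu h_+=\gamma_D^-\partial_\nu h_-$ and then invoke regularity) can be made rigorous, but since functions in $\mathcal H_\lambda$ need not lie in $H^2(\Omega_\pm)$ you would have to interpret $\partial_\nu h_\pm$ as weak conormal traces in $H^{-1/2}(\Sigma)$ and then conclude $\Delta h\in L^2(\dR^2)$ globally via Green's identity, which is more roundabout than the one-line Fourier argument.
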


For $\lambda < 0$ denote by $\mu_n(S(\lambda))$ the discrete eigenvalues of the positive self-adjoint operator $S(\lambda)$ 
order non-increasingly and with multiplicities taken into account.

\begin{proposition} \label{EigenvalueFuncUnbdd}
Let $S(\lambda)$ be defined by~\eqref{DefSLBIO} and let $n \in \mathbb{N}$ be fixed.  Then the following  holds:
\begin{enumerate}
\item[(i)] The function $(- \infty, 0) \ni \lambda \mapsto \lambda \mu_n( S(\lambda) )$ is continuous, strictly monotonically increasing and 
\begin{equation*}
\lim_{\lambda \to 0^{-} }\lambda \mu_n( S(\lambda) ) = 0 \quad \text{and} \quad \lim_{\lambda \to -\infty} \lambda \mu_n( S(\lambda) ) = - \infty.
\end{equation*}
\item[(ii)] For $a<0$ the unique solution $\lambda_n(a) \in (- \infty, 0)$ of $\lambda \mu_n(S(\lambda)) = a$ 
(see {\rm (i)}) admits the asymptotic expansion $\lambda_n(a) = - 4 a^2 + \mathcal{O}(1)$  for $a \to -  \infty$, 
where the dependence on $n$ appears in the $\mathcal{O}(1)$-term.
\end{enumerate}
\end{proposition}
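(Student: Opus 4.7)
The plan is to handle parts~(i) and~(ii) in parallel, with a sharp high-energy asymptotic of $\mu_n(S(\lambda))$ as $\lambda\to-\infty$ serving as the main engine. Throughout I use that $S(\lambda)$ is a positive compact self-adjoint operator on $L^2(\Sigma)$ for $\lambda<0$ (as noted in the text), so that the non-increasing eigenvalues $\mu_n(S(\lambda))$ are accessible via the Courant--Fischer min-max principle.

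I would first dispose of the routine parts of~(i). For strict monotonicity of $\lambda\mapsto\lambda\mu_n(S(\lambda))$ on $(-\infty,0)$, I use the variational identity
\begin{equation*}
\langle S(\lambda)\varphi,\varphi\rangle_{L^2(\Sigma)} \;=\; \min\Bigl\{\int_{\dR^2}\!\bigl(|\nabla v|^2+|\lambda|\,|v|^2\bigr)\,dx : v\in H^1(\dR^2),\ \gamma_D v=\varphi\Bigr\},
\end{equation*}
valid for $\varphi\in H^{1/2}(\Sigma)$ with minimizer $SL(\lambda)\varphi$; this follows from Green's identity together with the normal-derivative jump for the single-layer potential. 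Since the right-hand side is strictly monotonically increasing in $|\lambda|$, a direct comparison of minimizers gives $\langle S(\lambda_1)\varphi,\varphi\rangle>\langle S(\lambda_2)\varphi,\varphi\rangle$ whenever $\lambda_1<\lambda_2<0$ and $\varphi\neq 0$; since eigenfunctions of $S(\lambda_2)$ are smooth, min-max lifts this to $\mu_n(S(\lambda_1))>\mu_n(S(\lambda_2))$, and the negativity of the $\lambda$'s yields the desired increase of $\lambda\mu_n(S(\lambda))$. Continuity in $\lambda$ is immediate from the norm-continuous dependence of $S(\lambda)$ on $\lambda$, and the small-argument expansion $K_0(z)=-\log(z/2)-\gamma+O(z^2\log z)$ shows $\|S(\lambda)\|_{\mathcal L(L^2(\Sigma))}=O(|\log|\lambda||)$ as $\lambda\to 0^-$, hence $\lambda\mu_n(S(\lambda))=O(\lambda\log|\lambda|)\to 0$.

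The remaining statements---divergence $\lambda\mu_n(S(\lambda))\to-\infty$ in~(i) and all of~(ii)---I would derive from the sharper expansion
\begin{equation*}
\mu_n(S(\lambda)) \;=\; \frac{1}{2\sqrt{|\lambda|}} + O_n\bigl(|\lambda|^{-3/2}\bigr),\qquad \lambda\to -\infty,
\end{equation*}
with an $n$-dependent implicit constant. The heuristic is that on a straight line the convolution with $\tfrac{1}{2\pi}K_0(\sqrt{|\lambda|}|\cdot|)$ has Fourier symbol $(2\sqrt{\xi^2+|\lambda|})^{-1}$, which collapses to $(2\sqrt{|\lambda|})^{-1}$ when $|\lambda|\gg\xi^2$. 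To realize this rigorously on the closed curve $\Sigma$, I would compare $2\sqrt{|\lambda|}\,S(\lambda)$ with the intrinsic model $\sqrt{|\lambda|}(-\Delta_\Sigma+|\lambda|)^{-1/2}$, where $\Delta_\Sigma$ is the Laplace--Beltrami operator, via a partition of unity and tangent-line flattening, and show that the difference has $L^2(\Sigma)$-operator norm $O(|\lambda|^{-1})$. The model is diagonal in the Laplace--Beltrami basis with eigenvalues $\sqrt{|\lambda|}/\sqrt{\xi_n^2+|\lambda|}=1+O(|\lambda|^{-1})$ (with $\xi_n^2$ the eigenvalues of $-\Delta_\Sigma$), which yields the displayed asymptotic. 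Inserting it into $\lambda\mu_n(S(\lambda))=a$ and solving perturbatively produces $\sqrt{|\lambda|}=-2a+O(|a|^{-1})$, and squaring gives $|\lambda|=4a^2+O(1)$, i.e.\ $\lambda_n(a)=-4a^2+O(1)$.

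The main obstacle is precisely this comparison with the flat model: one needs the error estimate at the operator-norm level with the correct rate $O(|\lambda|^{-1})$, and must verify that the curvature of $\Sigma$ contributes only at lower order. Once this pseudodifferential-type expansion is in place, both conclusions of the proposition follow by clean algebra from the calculations sketched above.
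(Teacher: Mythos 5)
Your treatment of the monotonicity in (i) contains a genuine error. The variational identity you start from is misstated: the minimizer of $\int_{\dR^2}(|\nabla v|^2+|\lambda|\,|v|^2)\,dx$ over $v\in H^1(\dR^2)$ with $\gamma_D v=\varphi$ is not $SL(\lambda)\varphi$ (whose Dirichlet trace is $S(\lambda)\varphi$, not $\varphi$), and that minimum equals $(\varphi,S(\lambda)^{-1}\varphi)_{L^2(\Sigma)}$ rather than $(S(\lambda)\varphi,\varphi)_{L^2(\Sigma)}$. With the correct identification the monotonicity comes out in the opposite direction from what you assert: one has $\tfrac{d}{d\lambda}(S(\lambda)\varphi,\varphi)_{L^2(\Sigma)}=\|SL(\lambda)\varphi\|^2_{L^2(\dR^2)}\geq 0$, so $\mu_n(S(\lambda))$ \emph{increases} with $\lambda$ — which is also forced by your own asymptotics $\mu_n(S(\lambda))\approx (2\sqrt{|\lambda|})^{-1}$ and contradicts your claim $\mu_n(S(\lambda_1))>\mu_n(S(\lambda_2))$ for $\lambda_1<\lambda_2<0$. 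This matters, because with the true direction your final inference collapses: the product of the negative increasing function $\lambda$ with the positive increasing function $\mu_n(S(\lambda))$ need not be increasing. The paper handles this by differentiating the form of $\lambda S(\lambda)$ directly, $\tfrac{d}{d\lambda}(\lambda S(\lambda)\varphi,\varphi)_{L^2(\Sigma)}=\|\Psi_\lambda\varphi\|^2_{L^2(\dR^2)}\geq0$, and upgrades to \emph{strict} monotonicity via an analytic Fredholm/countability argument for the set where $I-\alpha\lambda S(\lambda)$ fails to be invertible; your sketch supplies no valid mechanism for either point. (Your continuity argument and the limit as $\lambda\to 0^-$ are fine.)

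The engine of your plan, $\mu_n(S(\lambda))=\tfrac{1}{2\sqrt{|\lambda|}}+\mathcal{O}_n(|\lambda|^{-3/2})$, is a correct statement and would indeed yield both the divergence in (i) and item (ii), but the comparison lemma it rests on — $\|2\sqrt{|\lambda|}\,S(\lambda)-\sqrt{|\lambda|}(-\Delta_\Sigma+|\lambda|)^{-1/2}\|=\mathcal{O}(|\lambda|^{-1})$ — is exactly the hard analytic content and is only announced, not proved. The required rate is sharp: if the normalized error were only $\mathcal{O}(|\lambda|^{-1/2})$, i.e. $\mu_n(S(\lambda))=\tfrac{1}{2\sqrt{|\lambda|}}+\mathcal{O}(|\lambda|^{-1})$, then solving $\lambda\mu_n(S(\lambda))=a$ gives only $\lambda_n(a)=-4a^2+\mathcal{O}(|a|)$, which misses the claim. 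So you must actually show that curvature (the cubic chord–arc deviation $|x(s)-x(t)|=|s-t|+\mathcal{O}(|s-t|^3)$) and the localization/periodization errors enter at relative order $|\lambda|^{-1}$; this is plausible (a Schur-test estimate of the kernel difference gives $\mathcal{O}(|\lambda|^{-3/2})$ for $S(\lambda)$ itself) but constitutes a substantial missing proof. The paper takes a genuinely different and softer route that avoids any such pseudodifferential comparison: it invokes the Exner–Yoshitomi strong-coupling asymptotics $E_n(\alpha)=-\tfrac{\alpha^2}{4}+\mu_n(H)+\mathcal{O}(\alpha^{-1}\ln|\alpha|)$ for Schr\"odinger operators with $\delta$-interactions together with the Birman–Schwinger relation $-1\in\sigma_{\rm p}(\alpha S(E_n(\alpha)))$, and then inverts these relations to obtain both $\lim_{\lambda\to-\infty}\lambda\mu_n(S(\lambda))=-\infty$ and the expansion $\lambda_n(a)=-4a^2+\mathcal{O}(1)$. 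In summary: your target asymptotics and overall strategy for the hard part are viable, but the key lemma is unproven and the monotonicity argument is broken as written.
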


\begin{proof}[Proof of Theorem \ref{ThmPropertiesTAlpha}]
\emph{Step 1:}  We verify that $T_{\alpha}$ is symmetric in $L^2(\dR^2)$.
Observe first that for $f \in \dom T_\alpha$ we have $\partial_{\overline{z}} f_{\pm} \in H^1(\Omega_{\pm})$
and $\Delta f_\pm = 4 \partial_{z}  \partial_{\overline{z}}  f_{\pm}  \in L^2(\Omega_{\pm})$, and hence $T_\alpha$ is well-defined. Moreover, as $C_0^{\infty}(\mathbb{R}^2 \setminus \Sigma) \subseteq \dom T_{\alpha}$ it is also clear that 
$\text{dom}\,T_\alpha$ is dense.
In order to show that $T_{\alpha}$ is symmetric, we note that integration by parts in $\Omega_{\pm}$ yields for  $f,g \in \dom T_{\alpha}$
\begin{equation}  \label{TASymmetricIbP}
\begin{split}
( - \Delta f_{\pm} , g_{\pm} &)_{L^2(\Omega_{\pm})} = ( -4  \partial_{z}  \partial_{\overline{z}}  f_{\pm} , g_{\pm} )_{L^2(\Omega_{\pm})}  \\
&=  4 ( \partial_{\overline{z}}  f_{\pm} , \partial_{\overline{z}}  g_{\pm} )_{L^2(\Omega_{\pm})}  \mp  2 \big( (\nu_1 - i \nu_2)  \gamma_D^{\pm} ( \partial_{\overline{z}}  f_{\pm} ),  \gamma_D^{\pm} g_{\pm} \big)_{L^2(\Sigma)} \\
 &= 4  ( \partial_{\overline{z}}  f_{\pm} , \partial_{\overline{z}}  g_{\pm} )_{L^2(\Omega_{\pm})} \mp 2 \big(  \gamma_D^{\pm} ( \partial_{\overline{z}}  f_{\pm}),  (\nu_1 + i \nu_2)   \gamma_D^{\pm} g_{\pm}) \big)_{L^2(\Sigma)}.
\end{split}
\end{equation}
Now, consider \eqref{TASymmetricIbP} for $f=g$ and add the equations for $\Omega_{+}$ and $\Omega_{-}$. Then, using 
$\gamma_D^{+} ( \partial_{\overline{z}} f_{+})=\gamma_D^{-} ( \partial_{\overline{z}} f_{-})$ and
the transmission condition for $f \in \dom T_{\alpha}$, one finds that
\begin{equation} \label{FormForTA}
\begin{split}
\big( T_{\alpha} f , f& \big)_{L^2(\dR^2)} = 4 \big( \| \partial_{\overline{z}} f_{+} \|^2_{L^2(\Omega_{+})} +  \| \partial_{\overline{z}} f_{-} \|^2_{L^2(\Omega_{-})} \big)  \\
&\quad -   \big(  \gamma_D^{+} ( \partial_{\overline{z}} f_{+}) +  \gamma_D^{-} ( \partial_{\overline{z}} f_{-} ) , (\nu_1 + i \nu_2) (  \gamma_D^{+} f_{+} -  \gamma_D^{-} f_{-} ) \big)_{L^2(\Sigma)} \\
&= 4 \| \partial_{\overline{z}} f_{+} \oplus \partial_{\overline{z}} f_{-} \|^2_{L^2(\mathbb{R}^2)} + \alpha \|   \gamma_D^{+} ( \partial_{\overline{z}} f_{+}) +  \gamma_D^{-} ( \partial_{\overline{z}} f_{-})  \|^2_{L^2(\Sigma)} \in \mathbb{R}.
\end{split}
\end{equation}
Since this holds for all $f \in \dom T_{\alpha}$, we conclude that $T_{\alpha}$  is symmetric.

\emph{Step 2:}  Proof of the Birman-Schwinger principle in~(iii): To show the first implication, assume that $\lambda \in \dC \setminus [0, \infty)$ with  $1 \in \sigma_\textup{p} ( \alpha \lambda S(\lambda) )$ is given and choose $\varphi \in \ker( I - \alpha \lambda S(\lambda)) \setminus \{0\}$. Then it follows from the mapping properties of $S(\lambda)$ that $\varphi = \alpha  \lambda S(\lambda) \varphi \in H^{1/2}(\Sigma)$  holds. Therefore, Proposition \ref{PropertiesPsi}~(i) implies that $f := \Psi_{\lambda} \varphi \in \mathcal{H}_\lambda$ fulfils $f \neq 0$, $f \in H^1(\mathbb{R}^2 \setminus \Sigma)$,  $\partial_{\overline{z}}f_{+} \oplus \partial_{\overline{z}} f_{-} \in H^1(\dR^2)$ and, as $\varphi \in \ker( 1 - \alpha \lambda S(\lambda)) \setminus \{0\}$, Proposition~\ref{PropertiesPsi}~(iii) implies
\begin{equation*}
 i (\nu_1 + i \nu_2) \big( \gamma_D^{+} f_{+} -  \gamma_D^{-} f_{-} \big) = \varphi = \alpha \lambda S(\lambda) \varphi = -i \alpha \big(  \gamma_D^{+} ( \partial_{\overline{z}} f_{+}) +  \gamma_D^{-} ( \partial_{\overline{z}} f_{-} ) \big).
\end{equation*}
Hence, $f \in \dom T_\alpha$. Moreover, as $f \in \mathcal{H}_\lambda$, we conclude  $f \in \ker( T_{\alpha} - \lambda) \setminus \{0\}$ and hence $\lambda \in \sigma_\textup{p}(T_{\alpha})$.

To show the second implication, assume that $\lambda \in \sigma_\textup{p}(T_{\alpha})$ is given and choose $f \in \ker(T_{\alpha} - \lambda)  \setminus \{0\}$.  Then, by Proposition \ref{PropertiesPsi}~(i) there exists a unique $\varphi \in H^{1/2}(\Sigma)$ such that $f = \Psi_{\lambda} \varphi$. Moreover, using $f \in \dom T_\alpha$ and Proposition~\ref{PropertiesPsi}~(iii) one finds that
\begin{equation*}
0 =  i (\nu_1 + i \nu_2) \big( \gamma_D^{+} f_{+} -  \gamma_D^{-} f_{-} \big) + i \alpha \big( \gamma_D^{+} ( \partial_{\overline{z}} f_{+}) +  \gamma_D^{-} ( \partial_{\overline{z}} f_{-} ) \big) = ( I - \alpha \lambda S(\lambda))\varphi.
\end{equation*}
Since $\varphi \neq 0$, we conclude $1 \in \sigma_\textup{p} ( \alpha\lambda S(\lambda))$.

\emph{Step 3:} Next, we prove that $T_{\alpha}$ is a self-adjoint operator and the resolvent formula in~(iv). Let $\lambda \in \dC \setminus (  [0, \infty) \cup \sigma_\textup{p}(T_{\alpha}) )$ be fixed. First, we show that  $I - \alpha \lambda S(\lambda)$ gives rise to a bijective map in $H^s(\Sigma)$  for every $s \in [0,1]$. Recall that $S(\lambda)$ is compact in $H^s(\Sigma)$. Since $I - \alpha \lambda S(\lambda)$ is injective for our choice of $\lambda$ by the Birman-Schwinger principle in~(iii), Fredholm's alternative shows that $I - \alpha \lambda S(\lambda)$ is indeed bijective.

Recall that $T_\alpha$ is symmetric; cf. \textit{Step~1}. Hence, to show that $T_\alpha$ is self-adjoint, it suffices to verify that $\text{ran}(T_\alpha-\lambda) = L^2(\mathbb{R}^2)$ holds for 
$\lambda \in \dC \setminus (  [0, \infty) \cup \sigma_\textup{p}(T_{\alpha}) )$. Fix such a $\lambda$, let $f \in L^2(\mathbb{R}^2)$, and define
\begin{equation} \label{ResolventofTA}
g = (- \Delta - \lambda)^{-1}f + \alpha \Psi_{\lambda} (I - \alpha \lambda S(\lambda))^{-1} \Psi_{\overline{\lambda}}^{\ast} f,
\end{equation}
which is well-defined by the considerations above.
Since $\Psi_{\overline{\lambda}}^{\ast} f \in H^{1/2}(\Sigma)$ by Proposition~\ref{PropertiesPsi}~(ii) and $(I - \alpha \lambda S(\lambda))^{-1}$ is bijective in $H^{1/2}(\Sigma)$, we conclude with Proposition~\ref{PropertiesPsi}~(i) that $\Psi_{\lambda} (I - \alpha \lambda S(\lambda))^{-1} \Psi_{\overline{\lambda}}^{\ast} f \in \mathcal{H}_\lambda  \subseteq H^1(\mathbb{R}^2 \setminus \Sigma)$. In particular, with $(- \Delta - \lambda)^{-1}f \in H^2(\mathbb{R}^2)$ this implies that $g \in H^1(\mathbb{R}^2 \setminus \Sigma)$ and $\partial_{\overline{z}}g_{+} \oplus \partial_{\overline{z}} g_{-} \in H^1(\dR^2)$. Moreover, with Proposition~\ref{PropertiesPsi}~(ii)--(iii) we obtain that
\begin{equation*}
\begin{split}
i (\nu_1 + i \nu_2) &\big(  \gamma_D^{+} g_{+} -  \gamma_D^{-} g_{-} \big) + i \alpha \big(  \gamma_D^{+} ( \partial_{\overline{z}}g_{+}) +  \gamma_D^{-}(\partial_{\overline{z}} g_{-}) \big) \\
&= \alpha  (I - \alpha \lambda S(\lambda) )^{-1} \Psi_{\overline{\lambda}}^{\ast} f  - \alpha \Psi_{\overline{\lambda}}^{\ast} f - \alpha^2 \lambda S(\lambda) (I - \alpha \lambda S(\lambda) )^{-1} \Psi_{\overline{\lambda}}^{\ast} f \\
&= \alpha  (I - \alpha \lambda S(\lambda))  (I - \alpha\lambda S(\lambda) )^{-1}  \Psi_{\overline{\lambda}}^{\ast} f  - \alpha \Psi_{\overline{\lambda}}^{\ast} f  = 0
\end{split}
\end{equation*}
and hence, $g \in \dom T_{\alpha}$.  As $\Psi_{\lambda} (I - \alpha \lambda S(\lambda))^{-1} \Psi_{\overline{\lambda}}^{\ast} f \in \mathcal{H}_\lambda$ by Proposition~\ref{PropertiesPsi}~(i), we conclude
\begin{equation*}
\begin{split}
\left( - \Delta - \lambda \right) g_{\pm} &= \left( -\Delta - \lambda \right) \big( (- \Delta - \lambda )^{-1} f \big)_{\pm} +  \alpha \left( -  \Delta - \lambda \right) \big(  \Psi_{\lambda} (I - \alpha \lambda S(\lambda) )^{-1} \Psi_{\overline{\lambda}}^{\ast} f \big)_{\pm}\\
&= \left( -  \Delta - \lambda \right) \big( (- \Delta - \lambda )^{-1} f \big)_{\pm} = f_{\pm},
\end{split}
\end{equation*}
i.e. $(T_{\alpha} - \lambda) g = f$. Since $f \in L^2(\dR^2)$ was arbitrary, we conclude that $\ran (T_{\alpha} - \lambda) = L^2(\dR^2)$ and that $T_{\alpha}$ is self-adjoint. Moreover, the resolvent formula in item~(iv) follows from \eqref{ResolventofTA}.

\emph{Step 4:} Next, we show $\sigma_{\text{ess}}(T_{\alpha}) = [0, \infty)$. Let $\lambda \in \dC \setminus \mathbb{R}$. Since $\Psi_{\overline{\lambda}}^{\ast} : L^2(\dR^2) \to L^2(\Sigma)$  is compact by Proposition~\ref{PropertiesPsi}~(ii), the resolvent formula in~(iv) implies that $(T_{\alpha} - \lambda)^{-1} - (-\Delta - \lambda)^{-1}$ is a compact operator in $L^2(\dR^2)$. Consequently, Weyl's Theorem \cite[Theorem~XIII.14]{RS77} yields that $\sigma_\textup{ess}(T_\alpha) = \sigma_\textup{ess}(-\Delta)=[0,\infty)$.

\emph{Step 5:} Proof of~(i): Let $\alpha \geq 0$. Then,~\eqref{FormForTA} implies that $T_\alpha$ is non-negative and hence, $\sigma(T_\alpha) \subset [0,\infty)$. Since the latter set coincides with $\sigma_\textup{ess}(T_\alpha)$, see \textit{Step~4}, we conclude $\sigma_\textup{disc}(T_\alpha)=\emptyset$.

\emph{Step 6:} Proof of~(ii): Let  $\alpha<0$. Since $\sigma_{\text{ess}}(T_{\alpha}) = [0, \infty)$, it follows from the Birman-Schwinger principle in~(iii) that 
\begin{equation*}
\sigma_{\text{disc}}(T_{\alpha}) = \left\{ \lambda_n \: | \:   n \in \dN \right\} =   \left\{ \lambda < 0 \: | \:  \exists n \in \dN \text{ such that } \lambda \mu_n(S(\lambda)) = \alpha^{-1} \right\}
\end{equation*} 
holds. Note that by Proposition~\ref{EigenvalueFuncUnbdd} the equation $\lambda \mu_n(S(\lambda)) = \alpha^{-1}$ has a unique solution $\lambda_n$ for all $n \in \mathbb{N}$. Moreover, for any $n \in \dN$  there cannot be infinitely many  $k \neq n$ with $\lambda_n = \lambda_k$, since otherwise $\alpha^{-1} < 0$ would be an eigenvalue with infinite multiplicity of the self-adjoint and compact operator $\lambda_n S(\lambda_n)$. Thus $\sigma_{\text{disc}}(T_{\alpha})$ is indeed an infinite set.  Furthermore, as $S(\lambda)$ is a positive self-adjoint operator in $L^2(\Sigma)$; cf. \textit{Step~1} in the proof of Proposition~\ref{EigenvalueFuncUnbdd}, we have by definition $\mu_n(S(\lambda)) \geq \mu_{n+1}(S(\lambda))$ implying $\lambda \mu_n ( S(\lambda)) \leq \lambda \mu_{n+1}(S(\lambda))$. Therefore, the monotonicity of the map $\lambda \mapsto \lambda \mu_n (S(\lambda))$ from Proposition \ref{EigenvalueFuncUnbdd} yields $\lambda_{n+1} \leq \lambda_{n}$ for all $n \in \dN$. This shows that $0$ cannot be an accumulation point of the sequence $(\lambda_n)_{n \in \dN}$ and 
as $\sigma_{\text{ess}}(T_{\alpha})\cap (-\infty,0)=\emptyset$ the sequence $(\lambda_n)_{n \in \dN}$ has no finite
accumulation points, that is, 
$\sigma_{\text{disc}}(T_{\alpha})$ must be unbounded from below. 

It remains to prove the asymptotic expansion in item~(ii). By the above considerations $\lambda_n$ is determined as the unique solution of $\lambda \mu_n(S(\lambda)) = \alpha^{-1}$. Clearly, if $\alpha \rightarrow 0^-$, then $a := \alpha^{-1} \rightarrow -\infty$. Hence, it follows from Proposition~\ref{EigenvalueFuncUnbdd}~(ii) with $a = \alpha^{-1}$ that 
$\lambda_n = - \frac{4}{\alpha^2} + \mathcal{O}(1)$ for $\alpha \rightarrow 0^-$
and that the dependence on $n$ appears in the $\mathcal{O}(1)$-term.
\end{proof}

\section{Proof of Theorem~\ref{ThmNonRelLimit}} \label{SectionNonrelLimit}

In this section we show that $T_\alpha$ is the nonrelativistic limit of a family of Dirac operators with electrostatic and Lorentz scalar $\delta$-shell potentials formally given by~\eqref{def_A_eta_tau_formal}, whose interaction strengths are suitably scaled. 
First, we recall the rigorous definition of the operator $A_{\eta, \tau}$ associated with~\eqref{def_A_eta_tau_formal}, see \cite{BEHL19, BHOP20, BHSS22} for details.
Let
\begin{equation} \label{PauliMatrices}
\sigma_1 = \left( \begin{array}{cc}
0 & 1\\                                              
1 & 0 \\                                            
\end{array}\right), \hspace{3mm} \hspace{3mm} \sigma_2 = \left( \begin{array}{cc}
0 & -i\\                                              
i & 0 \\                                            
\end{array}\right), \hspace{3mm} \text{ and } \hspace{3mm} \sigma_3 = \left( \begin{array}{cc}
1 & 0\\                                              
0 & -1 \\                                            
\end{array}\right),
\end{equation}
be the Pauli spin matrices and denote the $2 \times 2$ identity matrix by $I_2$.
Furthermore, for $x = (x_1,x_2) \in \dR^2$ we will use the abbreviations  
\begin{equation} \label{ShortNotation}
\sigma \cdot x = \sigma_1 x_1 + \sigma_2 x_2 \quad \text{and} \quad \sigma \cdot \nabla = \sigma_1 \partial_1 + \sigma_2 \partial_2.
\end{equation}
We define Dirac operators with electrostatic and Lorentz scalar $\delta$-shell interactions of strengths $\eta , \tau \in \dR$  in $L^2(\dR^2)^2$ by 
\begin{equation} \label{DefAetaTau}
  \begin{split}
A_{\eta , \tau} f &= \left( - i c (\sigma \cdot \nabla)  +\frac{c^2}{2} \sigma_3 \right)f_{+} \oplus \left(  - i c (\sigma \cdot \nabla)  + \frac{c^2}{2} \sigma_3 \right)f_{-}, \\
   \dom A_{\eta , \tau}  
   &=  \Big\{  f  \in H^1(\Omega_{+})^2 \oplus  H^1(\Omega_{-})^2 \: \big|   \\
 & ~~ i c (\sigma \cdot \nu) \left( \gamma_D^{+}f_{+} -  \gamma_D^{-}f_{-} \right) + \frac{1}{2} ( \eta I_2+ \tau \sigma_3 )  \left( \gamma_D^{+}f_{+} +  \gamma_D^{-}f_{-} \right) = 0 \Big\}.
  \end{split} 
\end{equation}
It is shown in \cite{BHOP20, BHSS22} that $A_{\eta, \tau}$ is self-adjoint in $L^2(\dR^2)^2$, whenever $\eta^2-\tau^2 \neq 4 c^2$, and as in  \cite{BEHL19} one sees that these operators are the self-adjoint realisations of the formal differential expression~\eqref{def_A_eta_tau_formal}.
In the above definition we are using units such that $\hbar = 1$ and consider the mass $m=\frac{1}{2}$, but we keep the speed of light $c$ as a parameter for the discussion of the non-relativistic limit $c \to \infty$.

Throughout this section we make use of the self-adjoint free Dirac operator $A_0$, which coincides with $A_{0,0}$ given in~\eqref{DefAetaTau} and which is defined on $H^1(\mathbb{R}^2)^2$. For $\lambda \in \rho(A_0)=\mathbb{C} \setminus ((-\infty, -\frac{c^2}{2}] \cup [\frac{c^2}{2}, \infty))$ 
the integral kernel of the resolvent of $A_0$ is given by $G_\lambda(x-y)$, where $G_\lambda(x)$ is defined for $x \in \dR^2 \setminus \{0\}$ by
\begin{multline} \label{IntegralkernelfreeDirac}
G_{\lambda}(x) = \frac{1}{2 \pi c} \sqrt{ \frac{\lambda^2}{c^2} - \frac{c^2}{4}} K_1 \left( -i  \sqrt{ \frac{\lambda^2}{c^2} - \frac{c^2}{4}} |x| \right) \frac{1}{|x|} (\sigma \cdot x) \\
+  \frac{1}{2 \pi c} K_0 \left( -i  \sqrt{ \frac{\lambda^2}{c^2} - \frac{c^2}{4} } |x| \right) \left( \frac{\lambda}{c} I_2 + \frac{c}{2} \sigma_3 \right);
\end{multline} 
cf. \cite[equation~(3.2)]{BHOP20}.
With this function we define the two families of integral operators 
\begin{equation} \label{PhiLandCL}
\begin{split}
\Phi_{\lambda} \varphi(x) &= \int_{\Sigma} G_{\lambda}(x-y) \varphi(y) \mathrm{d} \sigma(y), \quad \varphi \in L^2(\Sigma)^2, ~x \in \dR^2\setminus\Sigma, \\
\mathcal{C}_{\lambda} \varphi(x) &=  \lim_{\varepsilon \to 0^{+}} \int\limits_{\Sigma \setminus B(x,\varepsilon)} G_{\lambda}(x-y) \varphi(y) \mathrm{d} \sigma(y), \quad \varphi \in L^2(\Sigma)^2, ~ x \in \Sigma,
\end{split}
\end{equation}
where $B(x,\varepsilon)$ is the ball of radius $\varepsilon$ centered at $x$.
Both operators  $\Phi_{\lambda} : L^2(\Sigma)^2 \to L^2(\dR^2)^2$ and  $\mathcal{C}_{\lambda} :  L^2(\Sigma)^2 \to  L^2(\Sigma)^2$ are well-defined and bounded; 
cf. \cite[Proposition~3.3 and equation~(3.7)]{BHOP20}. 

In the following lemma, which is a preparation for the proof of Theorem~\ref{ThmNonRelLimit}, we will use the matrices
\begin{equation*}
M_1 = \left( \begin{array}{cc}
1 & 0 \\
0 & 0 
\end{array} \right), \quad M_2 = \left( \begin{array}{cc}
0 & 1 \\
0 & 0 
\end{array} \right), \quad \text{and} \quad M_3 = \left( \begin{array}{cc}
0 & 0 \\
0 & 1 
\end{array} \right);
\end{equation*}
products of scalar operators and matrices are understood componentwise, e.g. 
\begin{equation*}
(-\Delta - \lambda)^{-1} M_1  = \left( \begin{array}{cc}
(-\Delta - \lambda)^{-1} & 0 \\
0 & 0 
\end{array} \right): L^2(\dR^2)^2 \to L^2(\dR^2)^2.
\end{equation*}

\begin{lemma} \label{NonrelLimitProp}
Let $\lambda \in \dC \setminus \dR$. Then there exists a constant $K>0$, depending only on $\lambda$ and $\Sigma$, such that the estimates
\begin{subequations}
\begin{align}
	\label{estimate_A_0} 
\big\| \left(A_0 - (\lambda +  c^2 / 2 ) \right)^{-1} - \left( - \Delta - \lambda \right)^{-1} M_1 \big\| &\leq \frac{K}{c}, \\
\label{estimate_Phi_z}
\big\| c \Phi_{\lambda + c^2 / 2} M_3 - \Psi_{\lambda} M_2 \big\| &\leq \frac{K}{c}, \\
\label{estimate_Phi_z_star}
\big\| c M_3 \Phi_{\lambda+c^2 / 2}^{\ast} - M_2^{\top} \Psi_{\lambda}^{\ast} \big\| &\leq \frac{K}{c}, \\
\label{estimate_C_z}
\big\| c^2 M_3 \mathcal{C}_{\lambda + c^2 / 2} M_3 - \lambda S(\lambda) M_3 \big\| &\leq \frac{K}{c}, 
\end{align}
\end{subequations}
 are valid for all sufficiently large $c>0$. 
\end{lemma}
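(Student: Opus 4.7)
The plan is to start from the explicit integral kernel $G_{\lambda}$ in~\eqref{IntegralkernelfreeDirac} and substitute $\lambda \mapsto \lambda + c^2/2$. The key algebraic simplifications are that
\begin{equation*}
k_c := \sqrt{\frac{(\lambda + c^2/2)^2}{c^2} - \frac{c^2}{4}} = \sqrt{\lambda + \frac{\lambda^2}{c^2}} \qquad \text{and} \qquad \frac{\lambda + c^2/2}{c} I_2 + \frac{c}{2}\sigma_3 = \frac{\lambda}{c} I_2 + c\,M_1,
\end{equation*}
where the second identity uses $\tfrac{1}{2}(I_2 + \sigma_3) = M_1$. With the branch convention from the Notations one has $k_c = \sqrt{\lambda} + \mathcal{O}(c^{-2})$ as $c\to\infty$, and the kernel decomposes as
\begin{equation*}
G_{\lambda + c^2/2}(x) = \frac{k_c}{2\pi c}\, K_1(-i k_c |x|)\, \frac{\sigma\cdot x}{|x|} \; + \; \frac{1}{2\pi}\, K_0(-i k_c |x|)\, M_1 \; + \; \frac{\lambda}{2\pi c^2}\, K_0(-i k_c |x|)\, I_2,
\end{equation*}
so that its three summands are of orders $c^{-1}$, $1$, and $c^{-2}$ in $c$, respectively.

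Each of the four estimates then follows by multiplying this decomposition on the appropriate side by the matrices $M_i$ and comparing with the stated limit kernel. For~\eqref{estimate_A_0} the middle $M_1$-term is the leading contribution and already coincides with the kernel of $(-\Delta - \lambda)^{-1} M_1$ up to the replacement $k_c \leadsto \sqrt{\lambda}$; the $K_1$-piece is explicitly of order $c^{-1}$ and the $K_0 I_2$-piece of order $c^{-2}$. For~\eqref{estimate_Phi_z} the identities $(\sigma\cdot x)M_3 = (x_1 - i x_2) M_2$, $M_1 M_3 = 0$, and $I_2 M_3 = M_3$ yield
\begin{equation*}
c\, G_{\lambda + c^2/2}(x)\, M_3 = \frac{k_c}{2\pi}\, K_1(-i k_c |x|)\, \frac{x_1 - i x_2}{|x|}\, M_2 + \frac{\lambda}{2\pi c}\, K_0(-i k_c |x|)\, M_3,
\end{equation*}
whose first summand equals $L_{\lambda}(x) M_2$ modulo a $k_c - \sqrt{\lambda}$ discrepancy of order $c^{-2}$, and whose second summand carries an explicit $c^{-1}$-prefactor. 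Estimate~\eqref{estimate_Phi_z_star} then follows by taking operator adjoints, since $M_3^{\ast} = M_3$ and $M_2^{\ast} = M_2^{\top}$. For~\eqref{estimate_C_z} the sandwich identities $M_3(\sigma\cdot x)M_3 = 0$ and $M_3 M_1 M_3 = 0$, $M_3 I_2 M_3 = M_3$ ensure that only the $c^{-2}$-term survives, giving
\begin{equation*}
c^2 M_3\, G_{\lambda + c^2/2}(x)\, M_3 = \frac{\lambda}{2\pi}\, K_0(-i k_c |x|)\, M_3,
\end{equation*}
which differs from the kernel $\frac{\lambda}{2\pi} K_0(-i\sqrt{\lambda}|x|) M_3$ of $\lambda S(\lambda) M_3$ only through $k_c - \sqrt{\lambda} = \mathcal{O}(c^{-2})$.

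The remaining task is to promote these pointwise kernel statements to $\mathcal{O}(c^{-1})$ bounds in operator norm, \emph{uniformly} in $c$. The pieces with an explicit $c^{-1}$-prefactor are controlled directly by the known boundedness of single-layer potentials and their Riesz-type variants associated with $K_0$ and $K_1$, as used in Proposition~\ref{PropertiesPsi} and in~\cite{M00}. For the $k_c - \sqrt{\lambda}$ corrections one writes
\begin{equation*}
K_j(-i k_c |x|) - K_j(-i\sqrt{\lambda}|x|) = -i(k_c - \sqrt{\lambda})\,|x|\!\int_0^1 K_j'\bigl(-i(s k_c + (1-s)\sqrt{\lambda})|x|\bigr)\, ds,
\end{equation*}
uses the Bessel recursion $K_0' = -K_1$ and $K_1'(\zeta) = -K_0(\zeta) - K_1(\zeta)/\zeta$, and reduces matters to integral operators of the same type as those already controlled, with a uniformly bounded norm times the factor $k_c - \sqrt{\lambda} = \mathcal{O}(c^{-2})$. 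The main obstacle is precisely this uniformity: one must verify that the operator norms of the auxiliary layer potentials appearing under the integral are bounded independently of $c$. This is handled via the classical small-argument ($K_0 \sim -\log$, $K_j(\zeta) \sim \zeta^{-j}$) and exponential large-argument asymptotics of the modified Bessel functions, which guarantee that Schur-test or Young-type bounds for the relevant integral kernels are independent of $c$ once $c$ is large enough for $k_c$ to stay in a fixed compact neighbourhood of $\sqrt{\lambda}$ inside the upper half-plane.
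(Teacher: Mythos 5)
Your proposal is correct and follows essentially the same route as the paper: the same kernel decomposition of $G_{\lambda+c^2/2}$ using $\tfrac12(I_2+\sigma_3)=M_1$, the same matrix identities ($\sigma\cdot x\,M_3=(x_1-ix_2)M_2$, $M_1M_3=0$, $M_3\,\sigma\cdot x\,M_3=0$), the fundamental theorem of calculus combined with $K_0'=-K_1$ and $K_1'(\zeta)=-K_0(\zeta)-K_1(\zeta)/\zeta$ for the $k_c\leadsto\sqrt{\lambda}$ corrections, and Schur-test bounds made uniform in $c$ via the small- and large-argument Bessel asymptotics. The only cosmetic differences are the parametrization of the interpolation between $k_c$ and $\sqrt{\lambda}$ and that the paper additionally notes, via dominated convergence, that the sandwiched principal-value operator $c^2M_3\mathcal{C}_{\lambda+c^2/2}M_3$ is an ordinary integral operator, which is implicit in your cancellation $M_3\,\sigma\cdot x\,M_3=0$.
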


\begin{proof}
We use a similar strategy as in the proof of \cite[Proposition~5.2]{BEHL18}. In the following let 
$\lambda \in \dC \setminus \dR$ be fixed. Then $\lambda + \frac{c^2}{2} \in  \dC \setminus \dR$ and 
hence all operators in \eqref{estimate_A_0}--\eqref{estimate_C_z} are well-defined.
One verifies by direct calculation that for sufficiently large $c>0$ and all $t \in [0,1]$
\begin{equation} \label{SqrtBdd}
0 < \frac{1}{2} \left| \sqrt{\lambda} \right| \leq \left| \sqrt{ \lambda + t \frac{\lambda^2}{c^2}} \right| \leq \frac{3}{2} \left| \sqrt{ \lambda} \right| \quad \text{and} \quad \frac{1}{2} \text{Im}  \sqrt{\lambda}  \leq \text{Im} \sqrt{ \lambda + t \frac{\lambda^2}{c^2}} 
\end{equation}
hold. With the well-known asymptotic expansions of the modified Bessel functions and $K_1'(z) = -K_0(z) - \frac{1}{z} K_1(z)$, 
(see \cite{AS64}) one shows that there exist constants $\widehat{K} , \kappa , R > 0$, depending only on $\lambda$, such that
\begin{equation} \label{BesselBounds}
\left| K_j \left( - i \sqrt{  \lambda + t \frac{\lambda^2}{c^2}} |x| \right) \right| \leq \widehat{K} \left\{\begin{array}{ll} |x|^{-1}, & \text{for } |x| < R, \\ e^{- \kappa |x|}, & \text{for } |x| \geq R, \end{array}\right.
\end{equation}
and 
\begin{equation} \label{BesselBoundDerivative}
\left| K_1' \left( - i \sqrt{  \lambda + t \frac{\lambda^2}{c^2}} |x| \right) \right| \leq \widehat{K} \left\{\begin{array}{ll} |x|^{-2}, & \text{for } |x| < R, \\ e^{- \kappa |x|}, & \text{for } |x| \geq R, \end{array}\right.
\end{equation}
hold for all $x \in \dR^2 \setminus \{0\}$, $j \in \{0,1\}$, $t \in [0,1]$, and sufficiently large $c>0$.

Next, with $G_{\lambda+c^2/2}$ defined by \eqref{IntegralkernelfreeDirac}  we find 
\begin{multline} \label{GLambdaNonRelLimit}
G_{\lambda +  c^2  / 2 }(x) = \frac{1}{2\pi c} \sqrt{\lambda+\frac{\lambda^2}{c^2}} K_1 \left( - i \sqrt{\lambda+\frac{\lambda^2}{c^2}} |x| \right) \frac{1}{|x|} ( \sigma \cdot x) \\ + \frac{1}{2 \pi c} K_0 \left( -i \sqrt{\lambda+\frac{\lambda^2}{c^2}}  |x| \right) \left( \frac{\lambda}{c} I_2 + c M_1 \right).
\end{multline} 
Let 
\begin{equation*}
U_\lambda(x) = \frac{1}{2 \pi} K_0 \bigl( - i \sqrt{ \lambda} |x|  \bigr), \quad x \in \dR^2 \setminus \{0\},
\end{equation*}
be the integral kernel of the resolvent of the free Laplace operator; cf. \cite[Chapter~7.5]{T14}. Then 
\begin{equation*}
G_{\lambda + c^2  / 2}(x) - U_\lambda(x) M_1 = t_1(x) + t_2(x) + t_3(x)
\end{equation*}
holds, where the matrix-valued functions $t_1, t_2$, and $t_3$ are given by
\begin{equation*}
\begin{split}
t_1(x) &= \frac{1}{2 \pi c}  \sqrt{ \lambda + \frac{\lambda^2}{c^2}}  K_1 \left( -i \sqrt{ \lambda + \frac{\lambda^2}{c^2}}  |x| \right) \frac{ \sigma \cdot x}{|x|}, \\
t_2(x) &= \frac{1}{2 \pi} \left( K_0 \left( -i \sqrt{ \lambda + \frac{\lambda^2}{c^2}}  |x| \right) -  K_0 \bigl( - i \sqrt{ \lambda} |x| \bigr) \right) M_1, \\
t_3(x) &= \frac{\lambda}{2 \pi c^2} K_0 \left( -i \sqrt{  \lambda + \frac{\lambda^2}{c^2}}  |x| \right)  I_2.
\end{split}
\end{equation*}
With \eqref{SqrtBdd} and \eqref{BesselBounds} applied with $t=1$ one finds that there exist constants $k_1, \kappa, R > 0$, depending only on $\lambda$, such that for  $j \in \{1,3\}$ and sufficiently large $c>0$ one has
\begin{equation*}
\left| t_j(x) \right| \leq \frac{k_1}{c} \left\{\begin{array}{ll}  |x|^{-1}, & \text{for } |x| < R, \\ e^{- \kappa |x|}, & \text{for } |x| \geq R. \end{array}\right. 
\end{equation*}
To estimate $t_2$, we use $K_0' = -K_1$  and obtain with the fundamental theorem of calculus, \eqref{SqrtBdd}, and \eqref{BesselBounds} 
\begin{equation} \label{K0Bound}
\begin{split}
\Bigg| K_0 \left( -i \sqrt{ \lambda + \frac{\lambda^2}{c^2}}  |x| \right) &-  K_0 \left( - i \sqrt{ \lambda} |x| \right)  \Bigg| \leq \int_0^1 \left| \frac{d}{d t} K_0 \left( -i \sqrt{  \lambda + t \frac{\lambda^2}{c^2}}  |x| \right)   \right| d t \\
&= \int_0^1 \frac{|\lambda|^2 |x|}{\left|\sqrt{  \lambda + t \frac{\lambda^2}{c^2}} \right|} \frac{1}{2 c^2}  \left|K_1 \left( -i \sqrt{  \lambda + t \frac{\lambda^2}{c^2}}  |x| \right)    \right| d t \\
&\leq \frac{k_2}{c^2} \left\{\begin{array}{ll}  1, & \text{for } |x| < R, \\ e^{- \frac{\kappa}{2} |x|}, & \text{for } |x| \geq R, \end{array}\right.
\end{split}
\end{equation}
with a constant $k_2$ which depends only on $\lambda$. Thus, if we define $k_3 = 2 k_1 + \frac{k_2 R}{2 \pi}$, then
\begin{equation*}
\left| G_{\lambda + c^2 / 2}(x) - U_\lambda(x) M_1 \right| \leq \frac{k_3}{c} \left\{\begin{array}{ll}  |x|^{-1}, & \text{for } |x| < R, \\ e^{- \frac{\kappa}{2} |x|}, & \text{for } |x| \geq R. \end{array}\right.
\end{equation*}
This estimation for the integral kernel yields with the Schur test; cf. \cite[Proposition~A.3]{BEHL18} for a similar argument,  
\begin{equation*}
\big\| \left(A_0 - (\lambda +  c^2 / 2) \right)^{-1} - \left( -\Delta  - \lambda \right)^{-1} M_1 \big\| \leq \frac{K}{c}
\end{equation*}
for all sufficiently large $c>0$, which is the first claimed estimate~\eqref{estimate_A_0}.

Next, we prove~\eqref{estimate_Phi_z}. Recall that the integral kernel $L_{\lambda}$ of $\Psi_\lambda$ is given by~\eqref{DefIntegralkernPhiLambda}. Using that $\sigma_1 M_3 = M_2$, $\sigma_2 M_3 = -i M_2$, and $M_1 M_3 = 0$, we obtain with \eqref{GLambdaNonRelLimit} the decomposition
\begin{equation*}
c G_{\lambda + c^2 / 2}(x) M_3 - L_{\lambda} (x) M_2 = \tau_1(x) + \tau_2(x) + \tau_3(x)
\end{equation*}
with 
\begin{equation*}
\begin{split}
\tau_1(x) &= \frac{1}{2 \pi} \left( \sqrt{ \lambda + \frac{\lambda^2}{c^2}}   - \sqrt{\lambda} \right) K_1 \left( -i \sqrt{ \lambda + \frac{\lambda^2}{c^2}}  |x| \right) \frac{x_1 - i x_2}{|x|} M_2, \\
\tau_2(x) &= \frac{\sqrt{ \lambda}}{2 \pi} \left( K_1 \left( -i \sqrt{ \lambda + \frac{\lambda^2}{c^2}}  |x| \right) - K_1 \bigl( -i \sqrt{ \lambda}  |x| \bigr) \right) \frac{x_1 - i x_2}{|x|} M_2, \\
\tau_3(x) &= \frac{\lambda}{2 \pi c} K_0 \left( -i \sqrt{ \lambda + \frac{\lambda^2}{c^2}}  |x| \right) M_3.
\end{split}
\end{equation*}
Similar as above it can be shown that there exists a $k_4>0$, depending only on $\lambda$, such that for all $j \in \{1,2,3\}$
\begin{equation*}
\left| \tau_j(x) \right| \leq \frac{k_4}{c} \left\{\begin{array}{ll}  |x|^{-1}, & \text{for } |x| < R, \\ e^{- \frac{\kappa}{2} |x|}, & \text{for } |x| \geq R; \end{array}\right.
\end{equation*}
to see the estimate for $\tau_2$ one has to use \eqref{BesselBoundDerivative}. With the help of the Schur test the 
estimate~\eqref{estimate_Phi_z} follows (see also \cite[Proposition~A.4]{BEHL18} for a similar argument); the constant $k_4$  depends in this case on $\lambda$ and $\Sigma$. The estimate in~\eqref{estimate_Phi_z_star} follows by taking adjoints.

It remains to prove~\eqref{estimate_C_z}. Taking $M_3 \sigma \cdot x M_3 = 0$, which holds for any $x \in \mathbb{R}^2$,  and \eqref{K0Bound} into account we obtain that
\begin{equation*}
\begin{split}
\big|  c^2 M_3 G_{\lambda +  c^2 / 2}(x) M_3 &- \lambda U_\lambda(x) M_3 \big| \\
&= \frac{|\lambda|}{2 \pi} \left|  K_0 \left( -i \sqrt{ \lambda + \frac{\lambda^2}{c^2}}  |x| \right) -  K_0 \bigl( -i \sqrt{  \lambda}  |x| \bigr) \right| \leq \frac{k_5}{c^2}
\end{split}
\end{equation*}
holds for all $x \in \dR^2 \setminus \{0\}$.
Using the dominated convergence theorem, one sees that
\begin{equation*}
\big( c^2 M_3 \mathcal{C}_{\lambda + c^2 / 2} M_3 f \big)(x) = \int_{\Sigma} c^2 M_3 G_{\lambda + c^2 / 2}(x-y) M_3 f(y) d \sigma(y)
\end{equation*}
holds for all $f \in L^2(\Sigma)^2$ and $x \in \Sigma$, i.e. the integral does not have to be understood as principal value.
Thus we obtain with the Schur test \cite[III. Example~2.4]{kato} that
\begin{equation*}
\| c^2 M_3 \mathcal{C}_{\lambda + m c^2} M_3 - \lambda S(\lambda) M_3 \| \leq \frac{K}{c^2}.
\end{equation*}
In this case, the constant $K$ depends on $\lambda$ and $\Sigma$. This yields~\eqref{estimate_C_z} and finishes the proof of this lemma.
\end{proof}

Now we are prepared to prove Theorem \ref{ThmNonRelLimit} and show that $A_{-\alpha c^2/2, \alpha c^2/2}$ converges in the nonrelativistic limit to $T_\alpha$ defined in~\eqref{DefSchroedOpOBT}.

\begin{proof}[Proof of Theorem \ref{ThmNonRelLimit}]
Let  $\lambda \in \dC \setminus \dR$ be fixed. Then, by \cite[Lemma~5.4, Proposition~5.5, Theorem~5.6, and Lemma~5.9]{BHSS22} (see also \cite[Theorem~4.6]{BHOP20}) the operator $I_2 - \alpha c^2   M_3 \mathcal{C}_{\lambda + c^2/2} : L^2(\Sigma)^2 \to L^2(\Sigma)^2$  is boundedly invertible and the resolvent of $A_{- \alpha c^2/2 , \alpha c^2/2} - \frac{c^2}{2}$ is given by
\begin{equation} \label{ResolventDirac}
\begin{split}
\big(A_{-\alpha c^2/2 , \alpha c^2/2} - &(\lambda + c^2/2) \big)^{-1}  = \left( A_{0}-(\lambda +c^2/2)\right)^{-1} \\
&+ \Phi_{\lambda + c^2/2} \left( I - \alpha c^2 M_3 \mathcal{C}_{\lambda + c^2/2} \right)^{-1} \alpha c^2 M_3  \Phi_{\overline{\lambda} + c^2/2}^{\ast}.
\end{split}
\end{equation}
Because of $M_3 = M_3^2$  it follows from \cite[Proposition~2.1.8]{P94} that
\begin{equation*}
\sigma \big( M_3 \mathcal{C}_{\lambda+c^2/2} \big) \cup \{ 0 \} = \sigma \big(  M_3 \mathcal{C}_{\lambda +  c^2 / 2} M_3 \big) \cup \{ 0 \}.
\end{equation*}
In particular, this yields that the operator $I - \alpha c^2 M_3 \mathcal{C}_{\lambda +  c^2/2} M_3$ is boundedly invertible in $L^2(\Sigma)^2$ for all $c>0$ and a direct calculation shows 
\begin{equation} \label{relation_inverse}
(I - \alpha  c^2 M_3 \mathcal{C}_{\lambda + c^2 / 2} )^{-1} M_3 = M_3 ( I - \alpha c^2 M_3 \mathcal{C}_{\lambda + c^2 / 2}  M_3)^{-1}.
\end{equation}
Recall that for $\lambda \in \mathbb{C} \setminus \mathbb{R}$ also $I - \alpha \lambda S(\lambda)$ is boundedly invertible in $L^2(\Sigma)$; cf. Theorem~\ref{ThmPropertiesTAlpha}~(iv).
Hence, we obtain from Lemma \ref{NonrelLimitProp} and \cite[IV. Theorem~1.16]{kato} that
\begin{equation} \label{InverseOperatorConvergence}
\| ( I - \alpha c^2 M_3 \mathcal{C}_{\lambda +  c^2 / 2}  M_3)^{-1} - (I - \alpha \lambda S(\lambda) M_3)^{-1} \| \leq \frac{K}{c}
\end{equation}
holds for all sufficiently large $c>0$ with a constant $K>0$ which depends only on  $\lambda$, $\alpha$, and $\Sigma$. 

To conclude, note that~\eqref{ResolventDirac} and~\eqref{relation_inverse} yield
\begin{equation*}
\begin{split}
\big(A_{-\alpha c^2/2 , \alpha c^2/2}  - &(\lambda + c^2 / 2) \big)^{-1} = \big( A_0 - (\lambda +  c^2 / 2) \big)^{-1} \\
&+ c \Phi_{\lambda+ c^2 / 2} M_3 ( I - \alpha c^2 M_3 \mathcal{C}_{\lambda +  c^2 / 2}  M_3)^{-1} \alpha c M_3 \Phi_{\overline{\lambda}+ c^2/2} ^{\ast},
\end{split}
\end{equation*}
while Theorem \ref{ThmPropertiesTAlpha} (iv) and $M_2 M_3 M_2^{\top} = M_1$ show
\begin{equation*}
\begin{split}
(T_{\alpha} - \lambda )^{-1} M_1 &= ( - \Delta - \lambda)^{-1} M_1 + \Psi_{\lambda} ( I - \alpha \lambda S(\lambda) )^{-1} \alpha \Psi_{\overline{\lambda}}^{\ast} M_1 \\
&=  ( - \Delta - \lambda)^{-1} M_1 + \Psi_{\lambda} M_2 (I - \alpha \lambda S(\lambda) M_3)^{-1}  \alpha M_2^{\top} \Psi_{\overline{\lambda}}^{\ast}.
\end{split}
\end{equation*}
Using Lemma \ref{NonrelLimitProp} and  \eqref{InverseOperatorConvergence} the last two displayed formulae finally lead to the claimed convergence result and 
it also follows that the order of convergence is $\mathcal{O}(\frac{1}{c})$. 
\end{proof}

\appendix

\section{Proof of Propositions~\ref{PropertiesPsi} and~\ref{EigenvalueFuncUnbdd}} \label{Appendix}

Recall that for $\lambda \in \mathbb{C} \setminus [0, \infty)$ the operators $\Psi_\lambda$, $SL(\lambda)$, and $S(\lambda)$ are defined by \eqref{DefPsi},~\eqref{def_SL_potential}, and~\eqref{DefSLBIO}, respectively. 
First, we collect some properties of the single layer potential $SL(\lambda)$  that
are needed in the following. It is well-known that 
$SL(\lambda) : H^{1/2}(\Sigma) \to H^2(\mathbb{R}^2 \setminus \Sigma)$ gives rise to a bounded operator, that $(-\Delta - \lambda) SL(\lambda) \varphi=0$ in $\mathbb{R}^2 \setminus \Sigma$, and that for $\varphi \in H^{1/2}(\Sigma)$ the jump relations
\begin{equation} \label{SL_jump_relation} 
\gamma_D^+ (SL(\lambda) \varphi )_{+} = \gamma_D^- (SL(\lambda) \varphi )_{-}  \quad \text{and} \quad \partial_{\nu} (SL(\lambda) \varphi )_{+}  -   \partial_{\nu}  (SL(\lambda) \varphi )_{-} = \varphi
\end{equation}
hold; cf. \cite{M00} or \cite[Section~3.3]{HU20}. Furthermore, for the single layer boundary integral operator $S(\lambda)$ from \eqref{DefSLBIO} we have $S(\lambda) = \gamma_D SL(\lambda)$ and for all $\varphi \in L^2(\Sigma)$ the representations
\begin{equation} \label{repr_S_SL}
  SL(\lambda) \varphi = (-\Delta - \lambda)^{-1} \gamma_D' \varphi \quad \text{and} \quad S(\lambda) \varphi = \gamma_D (-\Delta - \lambda)^{-1} \gamma_D' \varphi
\end{equation}
hold (see \cite{HU20, M00}); here $\gamma_D:H^1(\mathbb R^2)\rightarrow L^2(\Sigma)$ and 
$\gamma_D':L^2(\Sigma)\rightarrow H^{-1}(\mathbb R^2)$ is the anti-dual operator.

\begin{proof}[Proof of Proposition \ref{PropertiesPsi}]
First, we prove item~(ii). For $\lambda \in \dC \setminus [0, \infty)$ define the operator
\begin{equation} \label{equation_Psi_star}
  \widehat{\Psi}_\lambda := -2i \gamma_D  \partial_{\overline{z}} (-\Delta-\overline{\lambda})^{-1}.
\end{equation}
Since $(-\Delta-\overline{\lambda})^{-1}: L^2(\mathbb{R}^2) \rightarrow H^2(\mathbb{R}^2)$ and $\gamma_D: H^1(\mathbb{R}^2) \rightarrow H^{1/2}(\Sigma)$ are bounded, we get that 
$\widehat{\Psi}_{\lambda} : L^2(\dR^2) \to H^{1/2}(\Sigma)$
is well-defined and bounded. Furthermore, as $H^{1/2}(\Sigma)$ is compactly embedded in $L^2(\Sigma)$ by Rellich's embedding theorem, the 
operator  $\widehat{\Psi}_{\lambda} : L^2(\dR^2) \to L^2(\Sigma)$ is compact.  
Note that $\widehat{\Psi}_\lambda$ is an integral operator with integral kernel  
\begin{equation*}
  \begin{split}
    k(x,y) &= -2i \partial_{\overline{z}} \frac{1}{2 \pi} K_0 \bigl( - i \sqrt{\overline{\lambda}} |x- y| \bigr) \\
    &= \frac{\sqrt{ \overline{\lambda}}}{2 \pi} K_1 \bigl( - i \sqrt{ \overline{\lambda}} |x-y| \bigr) \frac{ x_1-y_1 + i (x_2-y_2)}{|x-y|} \\
    &= \overline{L_\lambda(y-x)},
  \end{split}
\end{equation*}
where we used $K_0'=-K_1$ in the second step and 
$\overline{\sqrt{\lambda}} = - \sqrt{\overline{\lambda}}$ in the last step (recall that $\textup{Im} \sqrt{\omega} > 0$ for $\omega \in \mathbb{C} \setminus [0, \infty)$).
Hence, we conclude that 
\begin{equation*}
  \Psi_\lambda = \widehat{\Psi}_\lambda^*: L^2(\Sigma) \rightarrow L^2(\mathbb{R}^2)
\end{equation*}
is bounded and that all claims in item~(ii) are true.

Next, we show \eqref{relation_Psi_lambda_SL}. Let $\varphi \in L^2(\Sigma)$ and $f \in H^1(\mathbb{R}^2)$. Since $\Delta = 4 \partial_{\overline{z}} \partial_z = 4 \partial_z \partial_{\overline{z}}$, we see that $\partial_{\overline{z}} (-\Delta-\overline{\lambda})^{-1} f =  (-\Delta-\overline{\lambda})^{-1} \partial_{\overline{z}} f$. Hence, item~(ii) and \eqref{repr_S_SL} imply
\begin{equation*}
  \begin{split}
    (\Psi_\lambda \varphi, f)_{L^2(\Sigma)} &= \big(\varphi, -2i \gamma_D  \partial_{\overline{z}} (-\Delta-\overline{\lambda})^{-1} f\big)_{L^2(\mathbb{R}^2)} \\
    &=\big(\varphi, -2i \gamma_D   (-\Delta-\overline{\lambda})^{-1} \partial_{\overline{z}} f\big)_{L^2(\mathbb{R}^2)} \\
    &= \big(-2 i \partial_z SL(\lambda) \varphi, f\big)_{L^2(\mathbb{R}^2)}.
  \end{split}
\end{equation*}
Since $H^1(\mathbb{R}^2)$ is dense in $L^2(\mathbb{R}^2)$, we conclude that~\eqref{relation_Psi_lambda_SL} is true.
In particular, this and the properties of the single layer potential mentioned at the beginning of this appendix imply that 
\begin{equation} \label{mapping_properties_Psi}
  \Psi_\lambda: H^{1/2}(\Sigma) \rightarrow H^1(\mathbb{R}^2 \setminus \Sigma)
\end{equation}
is bounded and for $\varphi \in H^{1/2}(\Sigma)$ we have
\begin{equation} \label{PsiISEV}
i \partial_{ \overline{z}} \left( \Psi_{\lambda} \varphi \right)_{\pm} =  2 \partial_{\overline{z}} \partial_{z}  \left( SL(\lambda) \varphi \right)_{\pm} = \frac{1}{2}  \Delta  \left( SL(\lambda) \varphi \right)_{\pm} = - \frac{\lambda}{2} \left( SL(\lambda) \varphi \right)_{\pm}.
\end{equation}
Since $SL(\lambda) \varphi \in H^1(\dR^2)$, we obtain $ \partial_{ \overline{z}} \left( \Psi_{\lambda} \varphi \right)_{+} \oplus \partial_{ \overline{z}} \left( \Psi_{\lambda} \varphi \right)_{-} \in H^1(\dR^2)$ for any $\varphi \in H^{1/2}(\Sigma)$.

Now, we show (iii). Let $\varphi \in H^{1/2}(\Sigma)$. With~\eqref{PsiISEV} we see that 
\begin{equation*}
  -i ( \gamma_D^+ \partial_{\overline{z}} ( \Psi_{\lambda} \varphi )_{+} +  \gamma_D^- \partial_{\overline{z}} ( \Psi_{\lambda} \varphi )_{-}) = \lambda \gamma_D SL(\lambda) \varphi = \lambda S(\lambda)  \varphi
\end{equation*}
holds. Moreover, we obtain with $SL(\lambda) \varphi \in H^2(\mathbb{R}^2 \setminus \Sigma)$
\begin{equation*}
i ( \nu_1 + i \nu_2) \gamma^{\pm}_D \left( -2 i \partial_z SL(\lambda) \varphi \right)_{\pm} = \partial_\nu \left( SL(\lambda) \varphi \right)_{\pm} - 
i \partial_t \left( SL(\lambda) \varphi \right)_{\pm},
\end{equation*}
where $\partial_t$ 
is the tangential derivative on $\Sigma$. As $SL(\lambda) \varphi \in H^1(\mathbb{R}^2)$, one has the relation $\partial_t\left( SL(\lambda) \varphi \right)_{+} = \partial_t \left( SL(\lambda) \varphi \right)_{-}$ and consequently with~\eqref{SL_jump_relation} 
\begin{equation*}
i ( \nu_1 + i \nu_2) \left( \gamma^{+}_D ( \Psi_{\lambda} \varphi )_{+} -  \gamma^{-}_D ( \Psi_{\lambda} \varphi )_{-} \right) =  \partial_\nu \left( SL(\lambda) \varphi \right)_{+}  -  \partial_\nu \left( SL(\lambda) \varphi \right)_{-} = \varphi.
\end{equation*}
This finishes the proof of (iii).

It remains to prove item~(i). By applying the Wirtinger derivative $\partial_{z}$ to \eqref{PsiISEV} one gets with~\eqref{relation_Psi_lambda_SL} that
\begin{equation*}
-\Delta  \left( \Psi_{\lambda} \varphi \right)_{\pm} = -4 \partial_{z} \partial_{\overline{z}}  \left( \Psi_{\lambda} \varphi \right)_{\pm} = -2 i \lambda \partial_{z}  \left( SL(\lambda) \varphi \right)_{\pm} = \lambda  \left( \Psi_{\lambda} \varphi \right)_{\pm}
\end{equation*} 
holds for all $\varphi \in L^2(\Sigma)$ in the distributional sense.
This,~\eqref{mapping_properties_Psi}, \eqref{PsiISEV}, and the properties of $SL(\lambda)$ described at the beginning of this appendix show that $\Psi_{\lambda} \varphi \in \mathcal{H}_{\lambda}$ for all $\varphi \in H^{1/2}(\Sigma)$ and therefore the mapping $\Psi_{\lambda} : H^{1/2}(\Sigma) \to \mathcal{H}_{\lambda}$ is well-defined. Moreover, it follows from~(iii) that this mapping is injective. To prove that $\Psi_{\lambda} : H^{1/2}(\Sigma) \to \mathcal{H}_{\lambda}$ is surjective, let $f \in \mathcal{H}_{\lambda}$ be fixed. Define the function $\varphi = i (\nu_1 + i \nu_2) \big( \gamma_D^+ f_{+} - \gamma_D^- f_{-} \big) \in H^{1/2}(\Sigma)$ and $g = \Psi_{\lambda} \varphi \in \mathcal{H}_{\lambda}$. By (iii) we have that 
\begin{equation*}
\begin{split}
\gamma_D^+ ( f - g)_{+} - \gamma_D^- ( f - g)_{-} 
&= \gamma_D^+ f_{+} - \gamma_D^- f_{-} +  i (\nu_1 - i \nu_2) \varphi = 0.
\end{split}
\end{equation*}
This shows $f - g \in H^1(\dR^2)$. Moreover, due to $f, g \in \mathcal{H}_{\lambda}$ we have that  $\partial_{\overline{z}} (f - g) \in H^1(\dR^2)$, which implies $f-g \in H^2(\mathbb{R}^2)$. Combining this with $f, g \in \mathcal{H}_\lambda$ we find that $f-g \in \text{ker}\left( -  \Delta - \lambda \right) = \{ 0 \}$, i.e. $f = g = \Psi_\lambda \varphi$. Thus $\Psi_{\lambda} : H^{1/2}(\Sigma) \to \mathcal{H}_{\lambda}$ is also surjective and all claims in assertion~(i) are shown.
\end{proof}

\begin{proof}[Proof of Proposition \ref{EigenvalueFuncUnbdd}]
The proof of item~(i) is divided into 3 steps. In \textit{Step~1} we show that the map $(-\infty, 0) \ni \lambda \mapsto \mu_n(S(\lambda)) \in (0, \infty)$ is continuous and strictly monotonically increasing, in \textit{Step~2} we show that the same is true for the map $(-\infty, 0) \ni \lambda \mapsto \lambda \mu_n(S(\lambda)) \in (-\infty, 0)$. Using these results, we complete the proof of assertion~(i) in \textit{Step~3}.

\textit{Step~1:} Let $n \in \mathbb{N}$. We show that the map $(-\infty, 0) \ni \lambda \mapsto \mu_n(S(\lambda)) \in (0,\infty)$ is continuous and strictly monotonically increasing.
To verify that $\mu_n(S(\lambda))>0$ for $\lambda \in (-\infty,0)$, it suffices to prove that $S(\lambda)$ is a positive self-adjoint operator. From the definition of $S(\lambda)$ in \eqref{DefSLBIO} it follows that $S(\lambda)$ is self-adjoint. Next, let $\varphi \in L^2(\Sigma)$ with $\varphi \neq 0$ and set $f := SL(\lambda) \varphi$. Using the properties of $SL(\lambda)$ described at the beginning of this appendix one finds that $f \neq 0$ and
\begin{equation*}
\begin{split}
\big(S(\lambda) \varphi &, \varphi \big)_{L^2(\Sigma)} = \big( \gamma_D f , \partial_{\nu} f_{+} - \partial_{\nu} f_{-} \big)_{L^2(\Sigma)} \\
&= \big( f_{+} , \Delta f_{+} \big)_{L^2(\Omega_{+})} + \| \nabla f_{+} \|_{L^2(\Omega_{+})}^2 + \big( f_{-} , \Delta f_{-} \big)_{L^2(\Omega_{-})} + \| \nabla f_{-} \|_{L^2(\Omega_{-})}^2 \\
&\geq  \big( f_{+} , \Delta f_{+} \big)_{L^2(\Omega_{+})}  +  \big( f_{-} , \Delta f_{-} \big)_{L^2(\Omega_{-})} = - \lambda \| f \|_{L^2(\dR^2)}^2 > 0.
\end{split}
\end{equation*}
Therefore, $\mu_n(S(\lambda)) > 0$ must be true.

Next, we show that $(-\infty, 0) \ni \lambda \mapsto \mu_n(S(\lambda)) \in (0,\infty)$ is monotonically increasing and continuous. With~\eqref{repr_S_SL} one sees that $S(\cdot): \mathbb{C} \setminus [0, \infty) \rightarrow \mathcal{L}(L^2(\Sigma))$ is holomorphic  and that
$\frac{d}{d \lambda} S(\lambda) = \gamma_D (-\Delta - \lambda)^{-2} \gamma_D'$
holds. 
In particular, for any $\varphi \in L^2(\Sigma)$  the function $(- \infty, 0) \ni \lambda \mapsto (S(\lambda) \varphi , \varphi)_{L^2(\Sigma)}$  is continuously differentiable and
\begin{equation*}
\begin{split}
\frac{d}{d \lambda} \big(S(\lambda) \varphi , \varphi \big)_{L^2(\Sigma)} = \big( (-\Delta - \lambda)^{-1} \gamma_D' \varphi ,  (-\Delta - \lambda)^{-1} \gamma_D' \varphi \big)_{L^2(\Sigma)} &\\
= \| SL(\lambda) \varphi \|_{L^2(\dR^2)}^2 &\geq 0
\end{split}
\end{equation*}
is true.  Thus, the min-max principle implies that the map  $(- \infty, 0) \ni \lambda \mapsto  \mu_n(S(\lambda))$ is monotonically increasing for every $n \in \dN$.
Furthermore, due to the holomorphy of  $S(\cdot): \mathbb{C} \setminus [0, \infty) \rightarrow \mathcal{L}(L^2(\Sigma))$ and the estimate
\begin{equation*}
| \mu_n(S(\eta)) - \mu_n(S(\lambda)) | \leq \| S(\eta) - S(\lambda) \|,\quad \eta,\lambda<0,
\end{equation*}
(see \cite[Satz 3.17]{W00}), we find that $(- \infty, 0) \ni \lambda \mapsto \mu_n(S(\lambda))$ is continuous for $n \in \dN$.  

It remains to show  that the latter map is strictly monotonically increasing.  Define for $\alpha \in \dR \setminus \{0\}$ the operator-valued function $\mathcal{B}_1 : \dC \setminus [0,\infty) \to \mathcal{L}(L^2(\Sigma))$  by $\mathcal{B}_1(\lambda) = I - \alpha S(\lambda)$. By the properties of $S(\lambda)$ it is easy to see that $\mathcal{B}_1$ is holomorphic and $\mathcal{B}_1(\lambda)$ is a Fredholm operator with index $0$ for any fixed $\lambda$, since $S(\lambda)$ is compact in $L^2(\Sigma)$. Moreover, by \cite[Theorem~1.2]{GS14} there exists a constant $K>0$ such that 
\begin{equation} \label{BoundForS}
\| S(\lambda) \| \leq \frac{K}{ \sqrt{2+ |\lambda|}} \ln \sqrt{ 2 + \frac{1}{| \lambda |} } , \quad \lambda \in \dC \setminus [0,\infty).
\end{equation}
Hence, there exists  $\lambda_0 < 0$ such that $\| S(\lambda) \| < |\alpha|^{-1}$  is valid for all $\lambda < \lambda_0$ . This implies that $\mathcal{B}_1(\lambda)$  is boundedly invertible for every  $\lambda < \lambda_0$. Therefore, by \cite[Chapter XI., Corollary~8.4]{GGK90} the set
\begin{equation*}
 \mathcal{M}_{\alpha,1} = \left\{ \lambda \in \dC \setminus [0, \infty) \: \big| \: \mathcal{B}_1(\lambda) = I - \alpha S(\lambda) \text{ is not invertible} \right\}
\end{equation*}
is at most countable and does not have an accumulation point in $\dC \setminus [0 , \infty)$. Now assume that  $\lambda_1 < \lambda_2 < 0$  satisfy $\mu_n(S(\lambda_1)) = \mu_n(S(\lambda_2)) =: \alpha^{-1}$ for some $n \in \dN$.  Then it follows from the monotonicity of  $\lambda \mapsto \mu_n(S(\lambda))$ that $[\lambda_1 , \lambda_2] \subseteq   \mathcal{M}_{\alpha,1} $, which is a contradiction to the fact that $\mathcal{M}_{\alpha,1}$ is at most countable. Therefore, the mapping $(- \infty,0) \ni \lambda \mapsto \mu_n(S(\lambda))$  is continuous and strictly monotonically increasing for $n \in \dN$.

\textit{Step~2:} To show the continuity and strict monotonicity of the map $(- \infty, 0) \ni \lambda \mapsto \lambda \mu_n(S(\lambda))$ for all $n \in \dN$, we note first that the continuity follows from the continuity of the map $\lambda \mapsto \mu_n(S(\lambda))$ shown in \textit{Step~1}.  In order to prove the monotonicity, we use again $\frac{d}{d \lambda} S(\lambda) = \gamma_D (-\Delta - \lambda)^{-2} \gamma_D'$ and compute for a fixed $\varphi \in L^2(\Sigma)$ and $\lambda \in (-\infty, 0)$ with the help of~\eqref{relation_Psi_lambda_SL} and~\eqref{repr_S_SL}
\begin{equation*}
\begin{split}
\frac{d}{d \lambda} \big( \lambda S(\lambda) \varphi , \varphi)_{L^2(\Sigma)} &= \big( S(\lambda) \varphi + \lambda \gamma_D (- \Delta - \lambda)^{-2} \gamma_D' \varphi , \varphi \big)_{L^2(\Sigma)} \\
&= \big( - 4 \gamma_D (-\Delta - \lambda)^{-1}  \partial_{\overline{z}} \partial_{z} (- \Delta - \lambda)^{-1} \gamma_D' \varphi , \varphi \big)_{L^2(\Sigma)} \\
&= \| \Psi_\lambda \varphi \|_{L^2(\dR^2)}^2 \geq 0.
\end{split}
\end{equation*}
Thus, the min-max principle  yields the monotonicity of the mapping $(- \infty, 0) \ni \lambda \mapsto \lambda \mu_n(S(\lambda))$. To see the strict monotonicity, we use a similar strategy as in \textit{Step~1} and define for $\alpha \in \mathbb{R} \setminus \{ 0 \}$ the holomorphic mapping $\mathcal{B}_2 : \dC \setminus [0 , \infty) \to \mathcal{L}( L^2(\Sigma))$ by $\mathcal{B}_2(\lambda) = I - \alpha \lambda S(\lambda)$. Again, $\mathcal{B}_2(\lambda)$ is a Fredholm operator with index zero for any fixed $\lambda$ and it follows from~\eqref{BoundForS} that there exists 
$\lambda_3 < 0$ such that $\| \lambda S(\lambda) \|  < |\alpha|^{-1}$ holds for all $\lambda \in (\lambda_3, 0)$. In particular, $\mathcal{B}_2(\lambda)$ is boundedly invertible  for all $\lambda\in (\lambda_3, 0)$. 
It follows from \cite[Chapter XI., Corollary~8.4]{GGK90} that the set
\begin{equation*}
 \mathcal{M}_{\alpha,2} = \left\{ \lambda \in \dC \setminus [0, \infty) \: \big| \: \mathcal{B}_2(\lambda) = I - \alpha \lambda S(\lambda) \text{ is not invertible} \right\}
\end{equation*} 
 is at most countable and does not have an accumulation point in $\dC \setminus [0 , \infty)$. Now the same argument as in \textit{Step~1} shows that
  $(- \infty, 0) \ni \lambda \mapsto \lambda \mu_n(S(\lambda))$ is strictly monotonously increasing.

\textit{Step~3:} To study the limiting behaviour of $\lambda \mu_n(S(\lambda))$ for $\lambda \rightarrow 0$, note that \eqref{BoundForS} implies $\Vert\lambda S(\lambda)\Vert\rightarrow 0$ for $\lambda\rightarrow 0^-$
and hence,
\begin{equation} \label{Lambda0LimitLSL}
\lim_{\lambda \to 0^{-}} \lambda \mu_n(S(\lambda)) = 0, \quad n \in \dN.
\end{equation}

Next, we consider the limit of $\lambda \mu_n(S(\lambda))$ for $\lambda \to -\infty$. For this purpose, results on Schr\"odinger operators with $\delta$-interactions will be used. Define for $\alpha < 0$ the sesquilinear form 
\begin{equation*}
\mathfrak h_{\delta, \alpha}[f, g] = \big( \nabla f , \nabla g \big)_{L^2(\dR^2)} + \alpha \big( \gamma_D f , \gamma_D g \big)_{L^2(\Sigma)}, \quad f, g \in 
\text{dom}\,\mathfrak h_{\delta, \alpha} = H^1(\dR^2).
\end{equation*}
By \cite{BEKS94, EY01} the form $\mathfrak h_{\delta, \alpha}$ is semi-bounded and closed, and one can show for the self-adjoint operator $H_{\delta, \alpha}$, which is associated with $\mathfrak h_{\delta, \alpha}$ by the first representation theorem, that $\sigma_{\text{ess}}(H_{\delta, \alpha}) = [ 0 , \infty)$, that its discrete spectrum $\sigma_{\text{disc}}(H_{\delta, \alpha})$ is finite, and for $\lambda \in (-\infty,0)$ one has that
\begin{equation} \label{BirmannSchwingerSchroedDelta}
\lambda \in \sigma_\textup{p}( H_{\delta, \alpha}) \quad \Longleftrightarrow \quad -1 \in \sigma_\textup{p}(\alpha S(\lambda));
\end{equation}
see for instance \cite[Lemma~2.3 and Theorem 4.2]{BEKS94} and \cite[Theorems~3.5 and~3.14]{BLL13}. Recall that the eigenvalues 
$\mu_n(S(\lambda))$ are ordered non-increasingly with multiplicities taken into account.
If we order the discrete
eigenvalues of $H_{\delta, \alpha}$ in an increasing way then
the strict monotonicity of $\lambda \mapsto \mu_n(S(\lambda))$ implies that the $k$-th discrete eigenvalue $E_k(\alpha)$ 
(if it exists) satisfies the equation $-1 = \alpha \mu_k(S(E_k(\alpha)))$.

Let $n \in \dN$. Then by  \cite[Theorem~1]{EY01} the operator $H_{\delta, \alpha}$ has at least $n$ negative discrete eigenvalues (counted with multiplicities) 
if $-\alpha>0$ is sufficiently large, and the $n$-th discrete eigenvalue  $E_n(\alpha)$ of $H_{\delta, \alpha}$ admits the asymptotic expansion
\begin{equation} \label{AsymptotikEna}
E_n(\alpha) = - \frac{\alpha^2}{4} + \mu_n(H) + \mathcal{O}(\alpha^{-1} \ln|\alpha|), \qquad \alpha \rightarrow - \infty.
\end{equation}
Here $H$ is a fixed semibounded differential operator on $\Sigma$ that is independent of $\alpha$ and has purely discrete spectrum $\mu_1(H)\leq \mu_2(H)\leq \dots$. Thus for $\alpha \to -\infty$ we obtain with \eqref{BirmannSchwingerSchroedDelta} that
\begin{equation} \label{IneqAsymptotic}
\frac{\alpha}{4} + \frac{|\mu_n(H)|+1}{\alpha} \leq E_n(\alpha) \mu_n(S(E_n(\alpha))) = -\frac{E_n(\alpha)}{\alpha} \leq \frac{\alpha}{4} - \frac{|\mu_n(H)|+1}{\alpha}.
\end{equation}
This shows 
\begin{equation} \label{LambdaInfLimitLSL}
\lim_{\lambda \to -\infty} \lambda \mu_n( S(\lambda) ) = - \infty 
\end{equation}
and finishes the proof of item~(i).

To show item~(ii), we note first that by \eqref{Lambda0LimitLSL}, \eqref{LambdaInfLimitLSL}, and the strict monotonicity and continuity of the mapping  $\lambda \mapsto \lambda \mu_n(S(\lambda))$ it is clear that for any $a<0$ there 
is a unique solution $\lambda_n(a)$ of $\lambda \mu_n(S(\lambda)) = a$. Let $\mu_n(H)$ be as in~\eqref{AsymptotikEna}, 
define the numbers $k_{\pm} = \pm ( |\mu_n(H)|+1 )$ and let 
\begin{equation} \label{Taylor}
\alpha_{\pm} = -2 |a|  \left( \sqrt{1 + \frac{k_{\pm}}{ a^2} }+ 1\right) = -4 |a| - \frac{k_{\pm}}{|a|} + f_{\pm}(a)
\end{equation}
with some functions $f_{\pm}(a) = \mathcal{O}( a^{-3} )$ for large $|a|>0$, where the latter representation holds due to a Taylor series expansion.
Then one has
\begin{equation}\label{jaok}
a = \frac{\alpha_{\pm}}{4} - \frac{k_{\pm}}{\alpha_{\pm}}
\end{equation}
and
it follows with~\eqref{IneqAsymptotic} that
\begin{equation*}
E_n(\alpha_+) \mu_n(S(E_n(\alpha_+)))\leq \frac{\alpha_+}{4} - \frac{|\mu_n(H)|+1}{\alpha_+}=a=\lambda_n(a) \mu_n ( S(\lambda_n(a))
\end{equation*}
and
\begin{equation*}
  \lambda_n(a) \mu_n ( S(\lambda_n(a)) = a = \frac{\alpha_{-}}{4} + \frac{|\mu_n(H)|+1}{\alpha_{-}} \leq E_n(\alpha_-) \mu_n(S(E_n(\alpha_-))).
\end{equation*}
Since $\lambda \mapsto \lambda \mu_n( S(\lambda))$ is monotone we find 
\begin{equation}\label{oho}
E_n( \alpha_{+})\leq \lambda_n(a) \leq E_n( \alpha_{-}). 
\end{equation}
From \eqref{Taylor} we obtain
\begin{equation*}
\frac{1}{4} \alpha_\pm^2=4 a^2 + 2 k_\pm + g_{\pm}(a)
\end{equation*}
with functions $g_{\pm}(a) = \mathcal{O}( a^{-2} )$ for large $|a|>0$ and hence \eqref{AsymptotikEna} implies
\begin{equation}\label{soso3}
\big|E_n(\alpha_\pm) + 4 a^2 + 2 k_\pm + g_{\pm}(a) - \mu_n(H)\big| \leq C_1 \big|\alpha_\pm^{-1} \ln|\alpha_\pm|\big|
\end{equation}
for some constant $C_1 >0$.
Note that there exist constants $C_2, C_3 >0$ such that $C_2 |a| \leq \alpha_\pm \leq C_3 |a|$ holds for large $|a|>0$.
With this we conclude from \eqref{soso3}
that 
\begin{equation} \label{estimate_E_n_alpha_pm}
  \begin{split}
    \big|E_n(\alpha_\pm) + 4 a^2 + 2 k_\pm - \mu_n(H)\big| \leq C_4 \big| a^{-1} \ln|a| \big|
  \end{split}
\end{equation}
holds for some constant $C_4 > 0$ and 
for large $|a|>0$.
Taking \eqref{oho} and~\eqref{estimate_E_n_alpha_pm} into account, one concludes finally that
\begin{equation*}
|\lambda_n(a) + 4a^2| \leq 3 |\mu_n(H)| + 2 + \mathcal{O}(a^{-1} \ln|a|) = \mathcal{O}(1) \quad \text{for } a \to - \infty.
\end{equation*}
\end{proof}


\end{document}